\documentclass[12pt]{amsart}
\numberwithin{equation}{section}
\usepackage{amsmath,amssymb,amsthm,mathtools,bm}
\usepackage{mathrsfs}
\usepackage{geometry}
\usepackage{xcolor}
\usepackage{hyperref}
\usepackage{booktabs,longtable,array}
\usepackage{enumitem}
\hypersetup{colorlinks=true,linkcolor=blue,citecolor=blue,urlcolor=blue}
\setlist{nosep}

\theoremstyle{definition}
\newtheorem{theorem}{Theorem}[section]
\newtheorem{proposition}[theorem]{Proposition}
\newtheorem{lemma}[theorem]{Lemma}
\newtheorem{corollary}[theorem]{Corollary}
\theoremstyle{definition}
\newtheorem{definition}[theorem]{Definition}

\newtheorem{example}[theorem]{Example}

\newcommand{\kk}{\mathbb K}
\newcommand{\Umir}{\mathbf U^{\mathrm{mir}}_v(2)}
\newcommand{\Ufd}{\mathbf U(2,d)}
\newcommand{\MS}{\mathcal{MS}_v(2,d)}
\newcommand{\eps}{\varepsilon}
\newcommand{\te}{\widetilde e}
\newcommand{\tf}{\widetilde f}
\newcommand{\tk}{\widetilde k}
\newcommand{\tell}{\widetilde\ell}

\newcommand{\End}{\operatorname{End}}
\newcommand{\Ker}{\operatorname{Ker}}
\newcommand{\Mat}{\operatorname{Mat}}

\newcommand{\ro}{\operatorname{ro}}
\newcommand{\co}{\operatorname{co}}
\newcommand{\cF}{\mathcal F}
\newcommand{\cO}{\mathcal O}
\newcommand{\Iset}{\mathcal I}
\newcommand{\Ifd}{\mathbf{I}(2,d)}
\newcommand{\be}{\boldsymbol e}
\newcommand{\bfm}{\boldsymbol f}
\newcommand{\bell}{\boldsymbol\ell}
\newcommand{\beps}{\boldsymbol\varepsilon}

\title{Presenting mirabolic quantum Schur algebras $\mathcal{MS}_v(2,d)$}
\author{Hongjia Chen}
\address{School of Mathematical Sciences,
	University of Science and Technology of China, Hefei 230026, China}
\email{hjchen@ustc.edu.cn (H. Chen)}

\author{Jian Chen}
\address{School of Mathematical Sciences,
	University of Science and Technology of China, Hefei 230026, China}
\email{JianChen96@outlook.com (J. Chen)}
\date{}

\begin{document}

\begin{abstract}
Let $v$ be an indeterminate. 
We give a presentation of the mirabolic quantum Schur algebra $\mathcal{MS}_v(2,d)$ in terms of generators and defining relations. 
More precisely, we determine the kernel of the natural epimorphism from Rosso's mirabolic quantum $\mathfrak{sl}_2$, 
$\mathbf U^{\mathrm{mir}}_v(2)=\langle e,f,k^{\pm 1},\ell\rangle$, onto $\mathcal{MS}_v(2,d)$. 
Setting $P_d(k)=\prod_{r=0}^{d}(k-v^{2r-d})$ and $Q_d(k)=\prod_{r=1}^{d}(k-v^{2r-d})$, we prove
$\mathcal{MS}_v(2,d)\cong \mathbf U^{\mathrm{mir}}_v(2)/ \left\langle P_d(k),(\ell-1)Q_d(k)\right\rangle$. 
We further determine the split Wedderburn decomposition of this quotient and provide an equivalent presentation in terms of weight idempotents.
\end{abstract}

\maketitle
\tableofcontents

\section{Introduction}
\subsection{Background}
The quantum Schur algebras introduced by Dipper and James provide a link between the Iwahori-Hecke algebras and quantum groups via the quantum Schur-Weyl duality \cite{Jim86,DJ89}. Beilinson, Lusztig and MacPherson realised these algebras as convolution algebras associated with pairs of partial flags. Their stabilization procedure also constructs the modified quantum group \cite{BLM90}. This construction gives explicit bases and multiplication formulas, together with natural epimorphisms from quantum groups onto finite quantum Schur algebras. 
The problem of presenting quantum Schur algebras was initiated by Doty-Giaquinto in \cite{DG02} using these epimorphisms 
(see \cite{DP03} for a different approach). 
Further Schur-type algebras are discussed in \cite{Dot03,DG07,Wad11,DW15,FS26,CD26}. These results show that a finite quantum Schur algebra may be viewed as a quantum group with additional relations.

The same point of view has been useful beyond the type $A$. BLM-type constructions have been extended to affine quantum groups and to quantum symmetric pairs (for example, see \cite{DF15,BKLW18,LL21,DW22,FLLLW20,FLLLW23,LY26}). In these settings, a central problem is to determine the kernel of a natural homomorphism from a quantum group or an $\imath$-quantum group to a finite quantum Schur type algebra. 
In the standard type $A$ presentations, the additional defining relations may be chosen in the Cartan part of the quantum group. However, the situation can be different when the source algebra has an extra generator. 
In the $\imath$-quantum group setting, the usual diagonal relations must be supplemented by relations whose images involve tridiagonal matrices \cite{CD26}. Rank-one polynomial relations for an additional generator also occur in \cite{Li20}. 
These examples show that an additional generator may force defining relations that cannot be recovered from Cartan part alone.

Rosso introduced a mirabolic analogue of the BLM construction by replacing a pair of partial flags with a triple consisting of two partial flags and a vector \cite{Ros18}. The orbits are indexed by decorated matrices, following the orbit theory in \cite{MWZ99,Tra09}. 
For two-step flags, explicit multiplication formulas and five Chevalley-type generators are obtained in \cite{Ros18}. The same paper defines the mirabolic quantum $\mathfrak{sl}_2$, denoted here by $\Umir$, and proves a PBW theorem. A classification of all finite-dimensional simple modules and the semisimplicity of the finite-dimensional module category are also established there. The corresponding Schur-Weyl problem is formulated using the mirabolic Hecke algebra of \cite{Ros14}. 
The mirabolic BLM construction and its representation theory for general rank have been further studied in \cite{FZM25,GR26}.

For each $d$, the geometric generators give a natural epimorphism from $\Umir$ onto the finite convolution algebra $\MS$. 
The universal relations in $\Umir$ do not determine this finite quotient. Rosso points out that the finite quotient satisfies further relations depending on $d$, such as  $e^{d+1}=f^{d+1}=0$, but does not give the full defining ideal \cite[\S 4]{Ros18}. Thus, the presentation problem for mirabolic quantum Schur algebras is natural but nontrivial. 
The purpose of this paper is to determine this ideal explicitly.  
The answer consists of a Cartan polynomial relation together with an extra relation involving the idempotent generator $\ell$. 
Thus, even for two-step flags, the mirabolic finite quotient is not obtained from $\Umir$ by a Cartan truncation alone.

\subsection{Notations and main results}
Throughout the paper, $v$ is an indeterminate, $\kk=\mathbb C(v)$, and $d\in\mathbb N=\{0,1,2,\ldots\}$. We write $\MS$ for the generic mirabolic quantum Schur algebra attached to two-step flags in dimension $d$. Its geometric Chevalley-type generators are denoted by $\te,\tf,\tk,\tk^{-1},\tell$. The generators of $\Umir$ are denoted by $e,f,k,k^{-1},\ell$.

The main result is the following presentation theorem.
\begin{theorem}\label{thm:intro}
Let $P_d(k)=\prod_{r=0}^{d}(k-v^{2r-d})$, $Q_d(k)=\prod_{r=1}^{d}(k-v^{2r-d})$. The natural epimorphism
$$
\pi_d:\Umir\longrightarrow\MS,
\qquad
e\mapsto\te,\quad f\mapsto\tf,\quad k^{\pm 1}\mapsto{\tk}^{\pm 1},\quad \ell\mapsto\tell,
$$
has kernel
$$
\Ker\pi_d=
\left\langle P_d(k),(\ell-1)Q_d(k)\right\rangle.
$$
Equivalently, there is a unique $\kk$-algebra isomorphism
$$
\mathbf U(2,d):=
\Umir\Big/\left\langle P_d(k),(\ell-1)Q_d(k)\right\rangle
\xrightarrow{\ \sim\ }
\MS
$$
which sends $e,f,k^{\pm 1},\ell$ to $\te,\tf,{\tk}^{\pm 1},\tell$, respectively.
\end{theorem}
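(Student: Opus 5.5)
We prove the two inclusions separately. One shows that the two relations hold in $\MS$. The other shows that $\mathbf U(2,d)$ has dimension at most $\dim_\kk\MS$, so that the induced surjection $\mathbf U(2,d)\to\MS$ is an isomorphism.

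\emph{Step 1: the relations hold in $\MS$.} The basis of $\MS$ is indexed by decorated $2\times 2$ matrices whose entries sum to $d$. Using the multiplication formulas of Rosso, $\tk$ acts diagonally: on the basis element attached to a decorated matrix $A$ it acts by $v^{2r-d}$, where $r$ is (up to convention) the first row sum of $A$. The row sums of $A$ range over $\{0,\dots,d\}$, so $P_d(\tk)=0$. For $(\tell-1)Q_d(\tk)$, the factor $Q_d(\tk)$ kills every weight space except the one with $v^{-d}$, that is, the weight with $r=0$, where the first flag step is zero. There the first row of $A$ vanishes, so the decoration is forced. One checks from the formula for $\tell$ (the idempotent attached to the vector lying in the relevant step) that $\tell$ acts as the identity on this weight space. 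Hence $(\tell-1)Q_d(\tk)=0$, and $\pi_d$ factors through $\mathbf U(2,d)$.

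\emph{Step 2: upper bound on $\dim\mathbf U(2,d)$.} In $\mathbf U(2,d)$ the element $k$ satisfies a split polynomial with distinct roots. So we have orthogonal weight idempotents $1_r$ for $r=0,\dots,d$, given by Lagrange interpolation, with $\sum_r 1_r=1$ and $k1_r=v^{2r-d}1_r$. The relations $ke=v^2ek$ and $kf=v^{-2}fk$ give $e1_r=1_{r+1}e$ and $f1_r=1_{r-1}f$, with the convention $1_{d+1}=1_{-1}=0$. The second relation becomes $\ell 1_0=1_0$, since $(\ell-1)Q_d(k)$ is a nonzero scalar multiple of $(\ell-1)1_0$. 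Presumably $\ell$ commutes with $k$ in $\Umir$, so $\ell 1_r=1_r\ell$. Using Rosso's PBW theorem, $\mathbf U(2,d)$ is spanned by the elements $f^a\ell^{\epsilon}e^b1_r$ (or whatever ordered monomials the PBW basis provides, with $\ell$ idempotent so that $\epsilon\in\{0,1\}$), subject to the constraints $0\le r+b\le d$ and $0\le r+b-a\le d$ (in the appropriate order). Counting these spanning monomials, and reducing further using $\ell1_0=1_0$ together with the relations between $\ell$ and $e,f$, should give exactly the number of decorated matrices. That number is the dimension of $\MS$ computed in Rosso's orbit parametrization.

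\emph{Main obstacle.} The hard part is this counting in Step 2. Without the relation $\ell1_0=1_0$ the naive span has dimension roughly twice that of the Schur algebra, and one must check that this single relation, propagated through the commutation relations of $\ell$ with $e$ and $f$ (something like $e\ell=\ell e\ell$ and $\ell f=\ell f\ell$), cuts the span down to exactly the right size. I would try to control the count by writing it as a Wedderburn sum $\sum_\lambda(\dim L_\lambda)^2$. Here the $L_\lambda$ are the simple $\Umir$-modules, from Rosso's classification, on which $P_d(k)$ and $(\ell-1)Q_d(k)$ act by zero. Rosso proved semisimplicity of the finite-dimensional module category, so $\mathbf U(2,d)$ is semisimple with exactly these simples. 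Its dimension is therefore this sum, which I then compare with $\dim\MS$. The semisimplicity argument also requires showing that $\mathbf U(2,d)$ is finite-dimensional, which follows from the spanning set above even before it is sharpened.
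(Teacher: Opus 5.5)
Your final route is the paper's: Lagrange idempotents from $P_d(k)=0$, a finite PBW spanning set, Rosso's classification and semisimplicity, then a Wedderburn count against the orbit count. But you stop where the upper bound is decided: ``which I then compare with $\dim\MS$'' is the theorem, and your heuristics for that step are off.

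\textbf{Your heuristics.} Counting PBW monomials does not work, and not only because of $\ell$. For $d\ge1$, (M3) and $f\eps_0=0$ give $fe\eps_0=[d]\eps_0$, so your spanning monomials are already dependent in the $\ell$-free part. Nor does $\ell\eps_0=\eps_0$ cut the size roughly in half. Modulo $P_d(k)$ alone, the same Wedderburn count gives $(d+1)^3+(d+1)^2$, and the second relation removes exactly one block $\Mat_{d+1}(\kk)$.

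\textbf{What you must supply.}
(i) The list of survivors. $P_d(k)=0$ excludes every $L^-(m,\bullet)$ and forces $m\in\Iset_d$. Since $\eps_0$ projects onto $\Ker(k-v^{-d})$, which vanishes for $m<d$, the second relation matters only at $m=d$. There it kills $L^+(d,0)$, on which $\ell=0$. It does not kill $L^+(d,1)$ or $L^+(d,01)$: the $v^{-d}$-eigenspace of $L^+(d,01)$ is $\kk u^+_{d,1}$, on which $\ell=1$.
(ii) The count. Write $D_d=\sum_{m\in\Iset_d}(m+1)^2+\sum_{m\in\Iset_d,\,m\ge1}(2m)^2+\sum_{m\in\Iset_{d-2}}(m+1)^2$. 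Then $D_d-D_{d-2}=(d+1)^2+(2d)^2+(d-1)^2=(d+1)^3-(d-1)^3$. Hence $D_d=(d+1)^3=\binom{d+3}{3}+4\binom{d+2}{3}+\binom{d+1}{3}=|\Xi_{2,d}|$.

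\textbf{Splitness.} ``Its dimension is therefore this sum'' requires $\End(L)\cong\kk$ for every simple $L$, because $\kk=\mathbb C(v)$ is not algebraically closed. The paper proves this using the one-dimensional joint $(k,\ell)$-eigenspace at the highest weight. You can bypass it. For any finite-dimensional semisimple algebra, $\dim_\kk\Ufd\le\sum_L(\dim_\kk L)^2$. The surjection onto $\MS$ gives the reverse inequality. So for the theorem you only need the exclusions in (i) and the count in (ii).

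\textbf{Step 1.} The decoration is not forced when the first row vanishes: $\varnothing$, $\{(2,1)\}$ and $\{(2,2)\}$ all occur. All you need is $\tell\mathbf1_0=\mathbf1_0$. This is immediate from $\tell=\mathbf1_0+\sum_{r\ge1}v^{-2r}(\mathbf1_r+x_r)$, because each term with $r\ge1$ has column sum $(r,d-r)\ne(0,d)$.
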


The two new relations play different roles.  The polynomial $P_d$ gives the allowed eigenvalues $v^{-d},v^{-d+2},\ldots,v^{d-2},v^d$ of $k$ on a $\Ufd$-module. So we call it the Cartan relation in this paper. The second relation is equivalent to $(\ell-1)\varepsilon_0=0$, where $\varepsilon_0=\frac{Q_d(k)}{Q_d(v^{-d})}$ acts as the projection onto the $v^{-d}$-eigenspace of $k$. 
Thus, the second relation requires $\ell$ to act as the identity on the $v^{-d}$-eigenspace.

We prove the theorem in four steps. First, the decorated-orbit basis gives $\dim_{\kk}\MS=(d+1)^3$. 
Second, we use the relation $P_d(k)=0$ to construct Lagrange idempotents $\varepsilon_0,\ldots,\varepsilon_d$ in $\Ufd$. Their shift relations with $e$ and $f$ imply $e^{d+1}=f^{d+1}=0$. Rosso's PBW basis then shows that $\Ufd$ is finite-dimensional. Third, Rosso's classification allows us to determine exactly which simple modules satisfy both new relations. This gives the split Wedderburn decomposition of $\Ufd$ and the dimension formula $\dim_{\kk}\Ufd=(d+1)^3$. 
Finally, the natural map $\Ufd\twoheadrightarrow\MS$ is an isomorphism by dimension.

The parameter $v$ remains generic throughout. This assumption ensures that the elements $v^{2r-d}$, $0\le r\le d$, are distinct.

\subsection{Organization} 
The paper is organised as follows. Section~\ref{sec:geom} recalls the decorated-orbit basis and proves the two new relations in the mirabolic quantum Schur algebras. Section~\ref{sec:quot} studies the quotient $\Ufd$ and its Lagrange idempotents. Section~\ref{sec:simple} determines all simple $\Ufd$-modules. Section~\ref{sec:main} establishes split semisimplicity, computes the dimension, and proves the main theorem. Section~\ref{sec:idemp} gives an equivalent presentation by weight idempotents.

\subsection*{Acknowledgment}
This paper is partially supported by the National Key R\&D Program of China (2024YFA1013802), the NSF of China (12671043) and the Innovation Program for Quantum Science and Technology (2021ZD0302902).

\section{Mirabolic quantum Schur algebras \texorpdfstring{$\MS$}{MS}}\label{sec:geom}
\subsection{Decorated orbits and the convolution basis}\label{subsec:orbits}

Let $q$ be a prime power and let $V$ be a $d$-dimensional vector space over $\mathbb F_q$. Set
$$
\cF(2,d)=
\{F=(0=F_0\subseteq F_1\subseteq F_2=V)\}.
$$
The group $G_d=\mathrm{GL}(V)$ acts diagonally on
$$
\mathfrak X_d=\cF(2,d)\times\cF(2,d)\times V.
$$
Following \cite[\S~2]{Ros18}, the space of $G_d$-invariant functions on $\mathfrak X_d$ is equipped with a convolution product. The structure constants with respect to the orbit basis lie in $\mathbb Z[q]$. Substituting $q=v^2$ and extending scalars to $\kk$ gives the generic mirabolic quantum Schur algebra $\MS$.

Let
$$
\Theta_{2,d}=
\left\{
A=(a_{ij})\in\operatorname{Mat}_2(\mathbb N)
\ \middle|\
\sum_{1\le i,j\le2}a_{ij}=d
\right\}.
$$
For $A\in\Theta_{2,d}$, write
$$
\ro(A)=(a_{11}+a_{12},a_{21}+a_{22}),
\qquad
\co(A)=(a_{11}+a_{21},a_{12}+a_{22}).
$$
For two-step flags, a decoration is one of the sets
$$
\varnothing,\quad
\{(1,1)\},\quad
\{(1,2)\},\quad
\{(2,1)\},\quad
\{(1,2),(2,1)\},\quad
\{(2,2)\}.
$$
A decorated matrix is a pair $(A,\Delta)$ such that $A\in\Theta_{2,d}$ and $a_{ij}>0$ for every $(i,j)\in\Delta$. Let $\Xi_{2,d}$ be the set of all such pairs.

\begin{example}\label{ex:sixdecor}
Let $\Delta_1=\varnothing$, $
\Delta_2=\{(1,1)\}$, $
\Delta_3=\{(1,2)\}$, $
\Delta_4=\{(2,1)\}$, $
\Delta_5=\{(1,2),(2,1)\}$, $
\Delta_6=\{(2,2)\}$, and
$$
A=\begin{pmatrix}a_{11}&a_{12}\\a_{21}&a_{22}\end{pmatrix}\in\Theta_{2,d}.
$$
The six possible decorated matrices can be displayed as follows:
$$
\begin{array}{ccc}
   (A,\Delta_1)=\begin{pmatrix}a_{11}&a_{12}\\a_{21}&a_{22}\end{pmatrix},  &(A,\Delta_2)=\begin{pmatrix}\boxed{a_{11}}&a_{12}\\a_{21}&a_{22}\end{pmatrix},  &(A,\Delta_3)=\begin{pmatrix}a_{11}&\boxed{a_{12}}\\a_{21}&a_{22}\end{pmatrix},\\
    (A,\Delta_4)=\begin{pmatrix}a_{11}&a_{12}\\\boxed{a_{21}}&a_{22}\end{pmatrix}, &(A,\Delta_5)=\begin{pmatrix}a_{11}&\boxed{a_{12}}\\\boxed{a_{21}}&a_{22}\end{pmatrix},  &(A,\Delta_6)=\begin{pmatrix}a_{11}&a_{12}\\a_{21}&\boxed{a_{22}}\end{pmatrix}.
\end{array}
$$
A boxed entry records a position belonging to $\Delta$, and every boxed entry is required to be positive. These are precisely the six cases in \cite[(3.1)]{Ros18}.
\end{example}

The decorated matrices $(A,\Delta)\in\Xi_{2,d}$ parametrize the diagonal $G_d$-orbits on $\cF(2,d)\times\cF(2,d)\times V$. We denote the orbit corresponding to $(A,\Delta)$ by $\cO_{A,\Delta}$ and its characteristic function by $T_{A,\Delta}$. The elements
$$
\{T_{A,\Delta}\mid(A,\Delta)\in\Xi_{2,d}\}
$$
form a $\kk$-basis of $\MS$ (cf. \cite[Definition~2.2 \& Section~3]{Ros18} or \cite{MWZ99}). The convolution product satisfies the following condition:
$$
T_{A,\Delta}T_{B,\Gamma}=0
\qquad\text{if}\qquad
\co(A)\ne\ro(B).
$$
For a diagonal matrix $D\in\Theta_{2,d}$, one also has
\begin{equation}\label{eq:conviden}
    T_{D,\varnothing}T_{A,\Delta}
=\delta_{\co(D),\ro(A)}T_{A,\Delta},\qquad T_{A,\Delta}T_{D,\varnothing}
=\delta_{\co(A),\ro(D)}T_{A,\Delta}.
\end{equation}
These identities follow directly from the definition of convolution (cf. \cite[(2.3)]{Ros18}).

\begin{lemma}\label{lem:geomdim}
For every $d\in\mathbb N$,
$$
\dim_{\kk}\MS=(d+1)^3.
$$
\end{lemma}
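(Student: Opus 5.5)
The dimension is simply the number of decorated matrices, since $\{T_{A,\Delta}\mid(A,\Delta)\in\Xi_{2,d}\}$ is a $\kk$-basis of $\MS$. So I will prove $|\Xi_{2,d}|=(d+1)^3$. For fixed $A\in\Theta_{2,d}$, the admissible decorations are exactly the six sets of Example~\ref{ex:sixdecor}, subject to the condition that every boxed entry is positive. I will therefore count, for each decoration type $\Delta_i$, the matrices $A\in\Theta_{2,d}$ whose entries in $\Delta_i$ are positive, and then add the six counts.

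These counts are standard compositions counts. The number of $A=(a_{ij})\in\Mat_2(\mathbb N)$ with entries summing to $d$, where $m$ prescribed entries must be positive, is $\binom{d-m+3}{3}$; substitute $a_{ij}=a'_{ij}+1$ in the positive positions. This gives:
\begin{itemize}
\item $\binom{d+3}{3}$ for $\Delta_1=\varnothing$;
\item $\binom{d+2}{3}$ for each of the four singletons $\Delta_2,\Delta_3,\Delta_4,\Delta_6$;
\item $\binom{d+1}{3}$ for $\Delta_5$.
\end{itemize}
We use the convention $\binom{n}{3}=0$ for $n<3$, which handles small $d$ correctly. Hence
\[
|\Xi_{2,d}|=\binom{d+3}{3}+4\binom{d+2}{3}+\binom{d+1}{3}.
\]

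It remains to check that this sum equals $(d+1)^3$. This is the Eulerian-number identity $n^3=\binom{n+2}{3}+4\binom{n+1}{3}+\binom{n}{3}$ (Worpitzky's identity) with $n=d+1$. It can also be verified directly: multiply by $6$ and expand,
\[
(d+3)(d+2)(d+1)+4(d+2)(d+1)d+(d+1)d(d-1)=(d+1)\bigl[(d^2+5d+6)+(4d^2+8d)+(d^2-d)\bigr]=(d+1)\cdot 6(d+1)^2.
\]

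There is no real obstacle here. The only point needing care is that the six decorations of Example~\ref{ex:sixdecor} are all the decorations for two-step flags, as in \cite[(3.1)]{Ros18}, so that decorated matrices correspond bijectively to basis elements. Given that, the result is the count above together with the polynomial identity, which I would state and verify in one line.
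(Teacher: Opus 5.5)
Your proposal is correct and follows essentially the same route as the paper: count the decorated matrices by decoration type to get $\binom{d+3}{3}+4\binom{d+2}{3}+\binom{d+1}{3}$, then identify this with $(d+1)^3$. Your explicit expansion of the binomial identity, together with the convention $\binom{n}{3}=0$ for $n<3$, fills in the step the paper leaves to the reader.
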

\begin{proof}
    If $\Delta=\varnothing$, every matrix in $\Theta_{2,d}$ is allowed, so this case contributes $\binom{d+3}{3}$ basis elements. For each of the four choices of $\Delta$ with $|\Delta|=1$, one prescribed entry of $A$ is positive. After subtracting $1$ from that entry, there remain $\binom{d+2}{3}$ choices. Finally, for $\Delta=\{(1,2),(2,1)\}$, both $a_{12}$ and $a_{21}$ are positive, and this case contributes $\binom{d+1}{3}$ choices. Therefore
$$
\dim_{\kk}\MS
=\binom{d+3}{3}+4\binom{d+2}{3}+\binom{d+1}{3}=(d+1)^3.  
$$
\end{proof}

\subsection{Weight idempotents and geometric generators}\label{subsec:geomgen}

Let $E_{ij}$ be the $2\times2$ matrix unit with 1 in the $(i,j)$-entry and zeros everywhere else. For $0\le r\le d$, put
$$
D(r,d-r)=
\begin{pmatrix}
r&0\\
0&d-r
\end{pmatrix},
\qquad
\mathbf1_r=T_{D(r,d-r),\varnothing}.
$$
For $0\le r\le d-1$, put
$$
e_r=T_{D(r,d-r-1)+E_{12},\varnothing},
\qquad
f_r=T_{D(r,d-r-1)+E_{21},\varnothing},
$$
and, for $1\le r\le d$, put
$$
x_r=T_{D(r,d-r),\{(1,1)\}}.
$$
The convolution identities \eqref{eq:conviden} above give
\begin{equation}\label{eq:orthbfirs}
    \mathbf1_r\mathbf1_s=\delta_{r,s}\mathbf1_r,
\qquad
\sum_{r=0}^{d}\mathbf1_r=1.
\end{equation}

The geometric Chevalley-type generators introduced in \cite[Theorem~3.5]{Ros18} are
$$
\te=\sum_{r=0}^{d-1}v^{-r}e_r,
\qquad
\tf=\sum_{r=0}^{d-1}v^{1+r-d}f_r,
$$
$$
\tk=\sum_{r=0}^{d}v^{2r-d}\mathbf1_r,
\qquad
\tk^{-1}=\sum_{r=0}^{d}v^{d-2r}\mathbf1_r,
$$
and
$$
\tell=\mathbf1_0+
\sum_{r=1}^{d}v^{-2r}(\mathbf1_r+x_r).
$$
\begin{theorem}\label{thm:rosgenMS}(\cite[Theorem~3.5]{Ros18})
$\te,\tf,\tk^{\pm 1}$ and $\tell$ generate the $\kk$-algebra $\MS$.
\end{theorem}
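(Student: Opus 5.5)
The statement is Rosso's generation result, Theorem~\ref{thm:rosgenMS}; we can cite it, but here is the self-contained argument we would give. Let $\mathcal A$ be the subalgebra of $\MS$ generated by $\te,\tf,\tk^{\pm1},\tell$. The plan is to show that every basis element $T_{A,\Delta}$, $(A,\Delta)\in\Xi_{2,d}$, lies in $\mathcal A$.

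First we recover the weight idempotents. Since $v$ is generic, the scalars $v^{2r-d}$ are distinct, so Lagrange interpolation applies. By \eqref{eq:orthbfirs},
$$\mathbf1_r=\prod_{s\ne r}\frac{\tk-v^{2s-d}}{v^{2r-d}-v^{2s-d}}\in\mathcal A .$$
Multiplying by these idempotents and using \eqref{eq:conviden} isolates the summands:
$$e_r=v^{r}\,\mathbf1_r\te\mathbf1_{r+1},\qquad f_r=v^{d-r-1}\,\mathbf1_{r+1}\tf\mathbf1_r,\qquad x_r=v^{2r}\mathbf1_r\tell\mathbf1_r-\mathbf1_r .$$
The indices $r$ versus $r+1$ are fixed by the row/column conventions. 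Hence all of $\mathbf1_r,e_r,f_r,x_r$ lie in $\mathcal A$.

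Next we get the undecorated part. For $\Delta=\varnothing$, the orbits are the usual BLM orbits for $\mathfrak{gl}_2$. The subalgebra they span is the ordinary quantum Schur algebra $S_v(2,d)$, which is the image of the classical epimorphism from $U_v(\mathfrak{gl}_2)$. By the BLM/Doty--Giaquinto theory it is generated by the $\mathbf1_r,e_r,f_r$. Concretely, we argue by induction on $a_{12}+a_{21}$ using the triangular multiplication formulas: $T_{A,\varnothing}$ equals a divided-power product $\mathbf1\,\te^{(a)}\tf^{(b)}\mathbf1$ modulo terms of smaller off-diagonal degree. This puts all $T_{A,\varnothing}$ in $\mathcal A$.

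Finally we treat the decorated orbits, and this is the main obstacle. The plan is to use Rosso's explicit multiplication formulas in \cite[\S3]{Ros18} for left and right multiplication of $T_{A,\Delta}$ by $x_r$, and by the $e_r,f_r$. We order $\Xi_{2,d}$ by the degeneration order of decorated orbits, i.e.\ by orbit dimension. For each decoration $\Delta_i$ we then exhibit a product with leading term $T_{A,\Delta_i}$.
\begin{itemize}
\item For $\Delta_2$: $x_{a_{11}+a_{12}}T_{A,\varnothing}$, or a similar product.
\item For $\Delta_3,\Delta_4$: $T_{A',\varnothing}\,x\,T_{A'',\varnothing}$, where the vector is placed in $F_1$ of one flag and then moved by an upper or lower triangular orbit.
\item For $\Delta_5,\Delta_6$: products combining two such moves.
\end{itemize}
In each case the product equals the target $T_{A,\Delta}$ times a nonzero power of $v$, plus a combination of basis elements whose orbits are smaller. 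Induction on the orbit order then finishes.

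The hardest part is checking these leading coefficients, which requires going through the six-case tables of \cite[(3.1)]{Ros18}. In particular we must verify that the vector decoration is not lost in the $\{(1,2),(2,1)\}$ and $\{(2,2)\}$ cases. If a direct check becomes too heavy, the fallback is a dimension count. Show that $\mathcal A$ acts faithfully on the natural module $\kk[\mathfrak X]$-type space with enough independent elements; equivalently, compare with the $(d+1)^3$ of Lemma~\ref{lem:geomdim}. This is the route we would only take if needed.
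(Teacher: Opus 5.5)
\textbf{Verdict: genuine gap.} The paper gives no argument for this statement; it simply quotes Rosso's Theorem~3.5. Your Steps 1--3 are essentially fine. Step 4, however, is the entire non-classical content of the theorem, and it is only a plan. Worse, the one product you name explicitly does not have the leading term you claim.

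\textbf{The $\Delta_2$ product.} Rosso's convolution is additive in the vector variable; this is what makes $\tell$, which averages $w$ over $F_1$, idempotent. So for $r=a_{11}+a_{12}$ the function $x_rT_{A,\varnothing}$ is the indicator of $\{(F,F',w)\mid (F,F')\in\cO_A,\ 0\neq w\in F_1\}$, i.e.\ $x_rT_{A,\varnothing}=T_{A,\{(1,1)\}}+T_{A,\{(1,2)\}}$. The orbit $\cO_{A,\{(1,2)\}}$ (where $w\in F_1\setminus F_1'$) has $\cO_{A,\{(1,1)\}}$ in its closure. Hence for $a_{12}>0$ the top term is the $\Delta_3$ one, not $\Delta_2$, and your triangularity claim fails as stated.

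\textbf{What Step 4 actually requires.} To isolate $\Delta_2$ you must insert $x$ at an intermediate flag whose first step lies inside $F_1\cap F_1'$. Take $B=\begin{pmatrix}a_{11}&a_{12}\\0&a_{21}+a_{22}\end{pmatrix}$ and $C=\begin{pmatrix}a_{11}&0\\a_{21}&a_{12}+a_{22}\end{pmatrix}$. The value of $T_{B,\varnothing}\,x_{a_{11}}\,T_{C,\varnothing}$ at $(F,F',w)$ is the number of $a_{11}$-dimensional $U\subseteq F_1\cap F_1'$ with $0\neq w\in U$. So the product is $T_{A,\{(1,1)\}}$ plus multiples of $T_{A',\{(1,1)\}}$ with $a'_{11}>a_{11}$, which is triangular. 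You need similar explicit products for the other decorations:
\begin{itemize}
\item For $\Delta_5$, take $(v^{2r}\mathbf1_r\tell\mathbf1_r)\,T_{A,\varnothing}\,(v^{2s}\mathbf1_s\tell\mathbf1_s)$ with $s=a_{11}+a_{21}$. This is $v^{2a_{11}}$ times the indicator of $w\in F_1+F_1'$ over $\cO_A$.
\item For $\Delta_6$, insert $\mathbf1_t+x_t=v^{2t}\mathbf1_t\tell\mathbf1_t$ at an intermediate flag with $F''_1\supseteq F_1+F_1'$ and $t=d-a_{22}+1$.
\end{itemize}
In each case you must also check that every other term is strictly smaller in your order. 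None of this is in the proposal, so the decorated basis elements are not yet shown to lie in $\mathcal A$.

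\textbf{Your fallback does not close the gap.} Comparing with $(d+1)^3$ requires the lower bound $\dim\mathcal A\ge(d+1)^3$, which is exactly the statement. You cannot import it from Theorem~\ref{thm:wed}. Passing from $\dim\Ufd$ to $\dim\mathcal A=\dim\overline\pi_d(\Ufd)$ needs injectivity of $\overline\pi_d$. The paper obtains that injectivity (Theorem~\ref{thm:mainiso}) from surjectivity, i.e.\ from the present theorem, so this would be circular. A non-circular version would have to show that no Wedderburn block of $\Ufd$ is killed by $\overline\pi_d$, i.e.\ that every simple module of Theorem~\ref{thm:simples} occurs in $\MS$. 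You have not sketched this.

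\textbf{Index slip in Step 2.} Since $\ro(e_r)=(r+1,d-r-1)$ and $\co(e_r)=(r,d-r)$, \eqref{eq:conviden} gives $e_r=v^{r}\mathbf1_{r+1}\te\mathbf1_r$ and $f_r=v^{d-r-1}\mathbf1_r\tf\mathbf1_{r+1}$. The sandwiches you wrote are zero.
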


\subsection{The Cartan polynomial}\label{subsec:geomrel}

For later use, we now define two polynomials that occur in the presentation. Let $X$ be an indeterminate. In the polynomial ring $\kk[X]$, set
\begin{equation}\label{eq:alpharv}
    \alpha_r=v^{2r-d}\quad(0\le r\le d).
\end{equation}
\begin{align}\label{eq:newrela}
    P_d(X)=\prod_{r=0}^{d}(X-\alpha_r),\qquad
Q_d(X)=\prod_{r=1}^{d}(X-\alpha_r).
\end{align}
When $d=0$, the polynomial $Q_0(X)$ is the empty product and hence equals $1$.

\begin{lemma}\label{lem:geomcalc}
For every $h(X)\in\kk[X]$,
$$
h(\tk)=\sum_{r=0}^{d}h(\alpha_r)\mathbf1_r.
$$
In particular, $P_d(\tk)=0$.
\end{lemma}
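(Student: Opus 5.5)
The plan is to show that $\tk$ acts diagonally with respect to the orthogonal idempotents $\mathbf1_0,\ldots,\mathbf1_d$, and then pass from powers of $\tk$ to arbitrary polynomials by linearity. By definition $\tk=\sum_{r=0}^{d}\alpha_r\mathbf1_r$, and $\tk^{-1}=\sum_{r=0}^d\alpha_r^{-1}\mathbf1_r$.

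First I would prove by induction on $n\ge0$ that
$$
\tk^n=\sum_{r=0}^d\alpha_r^n\mathbf1_r.
$$
For $n=0$ this is the identity $\sum_r\mathbf1_r=1$ from \eqref{eq:orthbfirs}. For the inductive step, write
$$
\tk^{n+1}=\Bigl(\sum_r\alpha_r^n\mathbf1_r\Bigr)\Bigl(\sum_s\alpha_s\mathbf1_s\Bigr)
$$
and expand. The orthogonality relation $\mathbf1_r\mathbf1_s=\delta_{r,s}\mathbf1_r$, also from \eqref{eq:orthbfirs}, kills the cross terms and leaves $\sum_r\alpha_r^{n+1}\mathbf1_r$.

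Next, for a general $h(X)=\sum_n c_nX^n\in\kk[X]$, I would sum the identities for $\tk^n$ with coefficients $c_n$. Exchanging the finite sums gives
$$
h(\tk)=\sum_{r=0}^{d}\Bigl(\sum_n c_n\alpha_r^n\Bigr)\mathbf1_r=\sum_{r=0}^{d}h(\alpha_r)\mathbf1_r,
$$
which is the claimed formula.

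For the final assertion, I would apply the formula to $h=P_d$. Each root $\alpha_r=v^{2r-d}$ with $0\le r\le d$ is a factor of $P_d$ by \eqref{eq:newrela}, so $P_d(\alpha_r)=0$ for every $r$. Hence $P_d(\tk)=0$.

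There is no real obstacle in this argument. The only inputs are the orthogonality and completeness of the $\mathbf1_r$ in \eqref{eq:orthbfirs}, which the paper has already derived from \eqref{eq:conviden}.
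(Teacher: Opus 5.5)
Your proof is correct and follows essentially the same route as the paper: compute $\tk^m=\sum_r\alpha_r^m\mathbf1_r$ from the orthogonality in \eqref{eq:orthbfirs}, extend to all $h(X)$ by linearity, and then use that every $\alpha_r$ is a root of $P_d$. Your explicit use of completeness $\sum_r\mathbf1_r=1$ for the case $n=0$ (i.e.\ for the constant term of $h$) fills in a step the paper leaves implicit.
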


\begin{proof}
The orthogonality of the $\mathbf1_r$ in \eqref{eq:orthbfirs} gives
$$
\tk^m=
\left(\sum_{r=0}^{d}\alpha_r\mathbf1_r\right)^m
=\sum_{r=0}^{d}\alpha_r^m\mathbf1_r
$$
for every $m\ge0$. The first identity follows by linearity. Since every $\alpha_r$ is a root of $P_d$, the second identity follows.
\end{proof}

\begin{lemma}\label{lem:geombound}
In $\MS$, we have
$$
\tell\mathbf1_0=\mathbf1_0
$$
and hence
$$
(\tell-1)Q_d(\tk)=0.
$$
\end{lemma}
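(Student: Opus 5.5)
The plan is to compute $\tell\mathbf1_0$ directly from the definition of $\tell$ and the convolution identities \eqref{eq:conviden}, and then to deduce the second identity from Lemma~\ref{lem:geomcalc}.

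First, expand
$$
\tell\mathbf1_0=\mathbf1_0\mathbf1_0+\sum_{r=1}^{d}v^{-2r}\left(\mathbf1_r\mathbf1_0+x_r\mathbf1_0\right).
$$
By \eqref{eq:orthbfirs}, $\mathbf1_0\mathbf1_0=\mathbf1_0$ and $\mathbf1_r\mathbf1_0=0$ for $r\ge1$. For the decorated terms, $x_r=T_{D(r,d-r),\{(1,1)\}}$ has column sum vector $\co(D(r,d-r))=(r,d-r)$, while $\mathbf1_0=T_{D(0,d),\varnothing}$ has row sum vector $(0,d)$. For $r\ge1$ these differ, so the second identity in \eqref{eq:conviden} (or the general vanishing rule $T_{A,\Delta}T_{B,\Gamma}=0$ when $\co(A)\ne\ro(B)$) gives $x_r\mathbf1_0=0$. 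Hence $\tell\mathbf1_0=\mathbf1_0$. This is consistent with the fact that no decorated term $x_0$ exists, because the box at $(1,1)$ requires $a_{11}=r>0$.

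Second, apply Lemma~\ref{lem:geomcalc} with $h=Q_d$:
$$
Q_d(\tk)=\sum_{r=0}^dQ_d(\alpha_r)\mathbf1_r=Q_d(\alpha_0)\mathbf1_0,
$$
since $\alpha_1,\dots,\alpha_d$ are roots of $Q_d$. Therefore
$$
(\tell-1)Q_d(\tk)=Q_d(\alpha_0)(\tell\mathbf1_0-\mathbf1_0)=0.
$$
This also covers $d=0$: there $Q_0=1$, the sum defining $\tell$ reduces to $\mathbf1_0=1$, and the claim is trivial.

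There is no real obstacle. The only point needing care is to justify $x_r\mathbf1_0=0$ from the row/column-sum vanishing of convolution, applied to a decorated basis element. The rule as stated in the paper holds for arbitrary $(A,\Delta)$, so I will invoke it directly.
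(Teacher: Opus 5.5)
Your proposal is correct and follows the paper's proof essentially step for step. You expand $\tell\mathbf1_0$, kill $\mathbf1_r\mathbf1_0$ and $x_r\mathbf1_0$ for $r\ge1$ via the row/column-sum matching condition, and then use Lemma~\ref{lem:geomcalc} to get $Q_d(\tk)=Q_d(\alpha_0)\mathbf1_0$, exactly as the paper does.
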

\begin{proof}
    The definition of $\tell$ gives
$$
\tell\mathbf1_0
=\mathbf1_0^2+
\sum_{r=1}^{d}v^{-2r}
(\mathbf1_r\mathbf1_0+x_r\mathbf1_0).
$$
We have $\mathbf1_0^2=\mathbf1_0$ and $\mathbf1_r\mathbf1_0=0$ for $r\ge1$. 
The basis element $x_r$ is indexed by $D(r,d-r)$, whose column sum vector is $(r,d-r)$. The element $\mathbf1_0$ is indexed by $D(0,d)$, whose row sum vector is $(0,d)$. Since these vectors are different for $r\ge1$, the convolution matching condition gives $x_r\mathbf1_0=0$. This proves the first identity.

By Lemma~\ref{lem:geomcalc}, we have 
$$
Q_d(\tk)=\sum_{s=0}^{d}Q_d(\alpha_s)\mathbf1_s.
$$
The terms with $s\ge1$ vanish. Since $\alpha_s$ are distinct, $c_d:=Q_d(\alpha_0)\ne0$. 
Thus $Q_d(\tk)=c_d\mathbf1_0$, and
$$
(\tell-1)Q_d(\tk)=c_d(\tell-1)\mathbf1_0=0.  
$$
\end{proof}

\section{The finite quotient of mirabolic quantum \texorpdfstring{$\mathfrak{sl}_2$}{sl2}}\label{sec:quot}

\subsection{The universal algebra and the quotient \texorpdfstring{$\textbf{U}(2,d)$}{Ufd}}\label{subsec:univ}

For $m\in\mathbb Z$, set
$$
[m]=\frac{v^m-v^{-m}}{v-v^{-1}}.
$$

\begin{definition}
    The algebra $\Umir$ is the unital associative $\kk$-algebra generated by $e$, $f$, $k$, $k^{-1}$, $\ell$ subject to
    \begin{enumerate}
        \item[(M1)] $kk^{-1}=k^{-1}k=1$,

        \item[(M2)] $kek^{-1}=v^2e$,\qquad $kfk^{-1}=v^{-2}f$,

        \item[(M3)] $ef-fe=(k-k^{-1})/(v-v^{-1})$,

        \item[(M4)] $\ell^2=\ell$,\qquad $k\ell=\ell k$,\qquad $\ell e\ell=\ell e$,\qquad $\ell f\ell=f\ell$,

        \item[(M5)]  ${[2]}e\ell e=v^{-1}e^2\ell+v\ell e^2$,

        \item[(M6)] ${[2]}f\ell f=v^{-1}\ell f^2+vf^2\ell$.
    \end{enumerate}
\end{definition}

This is Rosso's mirabolic quantum $\mathfrak{sl}_2$ \cite[Definition~4.1]{Ros18}. The geometric generators satisfy relations $\textup{(M1)}$-$\textup{(M6)}$, and  \cite[Theorem~3.5 \& Proposition 3.6]{Ros18} gives a natural epimorphism
\begin{equation}\label{eq:epmorMUMS}
    \pi_d:\Umir\twoheadrightarrow\MS.
\end{equation}

The following PBW theorem will be used below.

\begin{theorem}[Rosso]\label{thm:pbw}
The disjoint union of the following six sets is a $\kk$-basis of $\Umir$:
$$
\begin{aligned}
\mathcal B_0&=\{f^ae^bk^t\mid a,b\ge0,\ t\in\mathbb Z\},\\
\mathcal B_1&=\{\ell f^ae^bk^t\mid a,b\ge0,\ t\in\mathbb Z\},\\
\mathcal B_2&=\{f^ae^b\ell k^t\mid a,b\ge0,\ (a,b)\ne(0,0),\ t\in\mathbb Z\},\\
\mathcal B_3&=\{\ell f^ae^b\ell k^t\mid a,b\ge1,\ t\in\mathbb Z\},\\
\mathcal B_4&=\{f^a\ell e^bk^t\mid a,b\ge1,\ t\in\mathbb Z\},\\
\mathcal B_5&=\{e^b\ell f^ak^t\mid a,b\ge1,\ t\in\mathbb Z\}.
\end{aligned}
$$
\end{theorem}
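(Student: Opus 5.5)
The plan has two halves: show that $\mathcal B=\bigsqcup_{i=0}^5\mathcal B_i$ spans $\Umir$, and then show that $\mathcal B$ is linearly independent using the epimorphisms $\pi_d$ of \eqref{eq:epmorMUMS} for all $d$ at once.

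\emph{Spanning.} Let $S$ be the $\kk$-span of $\mathcal B$. Since $1\in\mathcal B_0\subseteq S$, it suffices to show that $S$ is stable under left multiplication by $e,f,k^{\pm1},\ell$.
\begin{enumerate}
\item For $k^{\pm1}$: relations (M2) and $k\ell=\ell k$ commute $k$ past everything up to a power of $v$, so $k^{\pm1}S\subseteq S$.
\item For $\ell$: on $\mathcal B_0,\mathcal B_1,\mathcal B_3$ this is immediate from $\ell^2=\ell$.
\item For $\ell$ on $\mathcal B_2,\mathcal B_4,\mathcal B_5$: I first derive the auxiliary identities
$$\ell e^b\ell=\ell e^b,\qquad \ell f^a\ell=f^a\ell\qquad (a,b\ge0)$$
by induction from (M4) and (M5)/(M6). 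For example, $\ell e^{b+1}\ell=\ell e\cdot e^b\ell$, and $e\ell e$ is rewritten using (M5). With these, $\ell f^a e^b\ell$ with $a=0$ or $b=0$ collapses into $\mathcal B_1$ or $\mathcal B_2$.
\item For $f$: $f\mathcal B_0,f\mathcal B_2,f\mathcal B_4\subseteq\mathcal B$ directly.
\item For $f\ell f^ae^b\cdots$ and $f e^b\ell f^a$: here I need commutation formulas moving $f$ past $e^b$, using the standard $\mathfrak{sl}_2$ identity
$$fe^b=e^bf-[b]\,e^{b-1}\frac{v^{1-b}k-v^{b-1}k^{-1}}{v-v^{-1}},$$
together with (M6) in the form $f\ell f^2=\ldots$. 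I also need the identity for $f\ell f^a$ expressed through $\ell f^{a+1}$ and $f^{a+1}\ell$, obtained by induction from (M6).
\item For $e$: symmetric, using (M5).
\end{enumerate}
A filtration by total degree in $e,f$ (with $\ell,k$ of degree $0$) shows that these rewritings terminate.

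\emph{Linear independence.} Suppose $u=\sum c_{a,b,t,i}\,\beta_{a,b,t,i}$ is a finite combination of elements of $\mathcal B$ with $u=0$ in $\Umir$. I apply $\pi_d$ for $d$ much larger than all exponents $a,b$ that occur. I then multiply on both sides by weight idempotents $\mathbf1_r,\mathbf1_s$; since $\tk$ is diagonal, $\mathbf1_s\pi_d(u)\mathbf1_r$ separates the $t$-dependence via Lemma~\ref{lem:geomcalc}, varying over many $r$. Taking $d$ and $r$ large enough separates the finitely many powers $t$ by Vandermonde.

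It remains to show that, for fixed $(a,b)$, the images of the six types $\mathcal B_i$ in the decorated orbit basis $T_{A,\Delta}$ are independent. The idea is BLM-style triangularity.
\begin{itemize}
\item $\mathbf1_s\tf^a\te^b\mathbf1_r$ has leading term $T_{A,\varnothing}$, where $A$ has off-diagonal entries $(b,a)$.
\item Inserting $\tell=\mathbf1_0+\sum v^{-2r}(\mathbf1_r+x_r)$ at the various positions produces leading terms with the decorations $\Delta_2,\dots,\Delta_6$.
\item The six types are distinguished by which decoration appears in the top term with respect to the orbit closure (dimension) order. For instance $\mathcal B_4$ should lead with decoration $\{(1,2),(2,1)\}$, and $\mathcal B_1,\mathcal B_2$ with the single-entry boxes.
\end{itemize}
The distinguished-decoration claim needs Rosso's multiplication formulas for $\te,\tf,\tell$ against $T_{A,\Delta}$, which I would take from \cite[\S3]{Ros18}.

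The main obstacle is this last step. The leading-term bookkeeping requires precise knowledge of how $\tell$ multiplies decorated orbits, and the six cases must be matched injectively to decorations. If that bookkeeping becomes unwieldy, the fallback is Bergman's diamond lemma. The reduction system would be:
\begin{itemize}
\item $ek\to v^{-2}ke$, and similarly for $f$; $ef\to fe+(k-k^{-1})/(v-v^{-1})$; $\ell^2\to\ell$;
\item $\ell e\ell\to\ell e$, $\ell f\ell\to f\ell$;
\item $e\ell e\to[2]^{-1}(v^{-1}e^2\ell+v\ell e^2)$, and the analogous rule for $f\ell f$;
\item extra derived rules for words such as $e\ell f^a e^b$.
\end{itemize}
One would then check that all overlap ambiguities resolve; the irreducible words are exactly those of $\mathcal B$.
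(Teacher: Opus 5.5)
\paragraph{Verdict.} Your spanning argument is fine in outline, but the linear-independence step has a genuine gap: the triangularity you propose is false, not merely unchecked.

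\paragraph{Spanning.} One correction: left multiplication by $e$ is not the mirror image of the $f$ case, because the normal order puts $f$ before $e$. For $e\,\ell f^ae^b$ you must first rewrite $f^ae^b=\sum_j e^{b-j}f^{a-j}p_j(k^{\pm1})$. Then use $e\ell e^c\in\kk\,\ell e^{c+1}+\kk\,e^{c+1}\ell$, the analogue from (M5) of your $f\ell f^a$ formula.

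\paragraph{Why the proposed triangularity fails.} In $\MS$ the convolution is additive in the vector coordinate $w$. Here $\mathbf1_r+x_r$ is the indicator of $\{(F,F,w):w\in F_1\}$, which is why $\tell$ is idempotent, and $\te,\tf$ are supported on $w=0$. So a left (resp.\ right) factor $\tell$ averages $w$ over $F_1$ (resp.\ $F_1''$). A middle $\tell$ averages over the intermediate subspace $F_1'$.
\begin{itemize}
\item Let $A$ be the orbit with $a_{12}=b$, $a_{21}=a\ge1$ and column sums $(r,d-r)$, i.e.\ the top orbit of $\tf^a\te^b\mathbf1_r$. In $\tf^a\tell\te^b$ the intermediate subspace satisfies $F_1'\supseteq F_1+F_1''$ with equal dimensions, so $F_1'=F_1+F_1''$.
\item In $\tell\,\tf^a\te^b\,\tell$ the two averages combine into the uniform average over $F_1+F_1''$. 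Counting gives the same factor $q^{-\dim(F_1+F_1'')}$ as in the previous case.
\item Hence $\pi_d(\ell f^ae^b\ell)$ and $\pi_d(f^a\ell e^b)$ coincide on all six decorations of $A$.
\item On $A$ the six types are uniform averages over $0$, $F_1$, $F_1''$, $F_1\cap F_1''$ (for $\mathcal B_5$) and $F_1+F_1''$ (twice). So none of them reaches the open decoration $\{(2,2)\}$, i.e.\ $w\notin F_1+F_1''$.
\end{itemize}
Thus the six leading vectors span only five dimensions. $\mathcal B_3$ and $\mathcal B_4$ share the leading term $(A,\{(1,2),(2,1)\})$, and no injective matching of types to decorations of $A$ exists. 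Switching to the dimension order does not help. For $A'=A+E_{11}+E_{22}-E_{12}-E_{21}$ one has $\dim\cO_{A',\{(2,2)\}}=\dim\cO_{A,\{(1,2),(2,1)\}}-a_{11}-1$, so $(A,\{(1,2),(2,1)\})$ still leads for both.

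\paragraph{The missing idea.} What is needed is a second-order computation.
\begin{itemize}
\item On $A'$, the subspace $F_1'$ occurring in $f^a\ell e^b$ strictly contains $F_1+F_1''$. Exactly one choice of $F_1'$ contains a given $w\notin F_1+F_1''$.
\item So $\ell f^ae^b\ell-f^a\ell e^b$ has a nonzero coefficient on $(A',\{(2,2)\})$. Elements of bidegree $(a-1,b-1)$ never reach this decorated orbit, since on their top orbit $A'$ they also avoid $\{(2,2)\}$.
\item The correct triangularity is therefore shifted. $\mathcal B_4$ leads with $(A,\{(1,2),(2,1)\})$, and $\mathcal B_3$ minus a suitable multiple of $\mathcal B_4$ leads with the $\{(2,2)\}$-decoration one orbit lower.
\end{itemize}
Together with your separation of the powers $k^t$ via $\mathbf1_r$ and large $d$, this would finish the argument, but your proposal does not contain it. The $k^t$-separation is fine, and it is not circular, since $\pi_d$ comes from Rosso's Theorem~3.5 and Proposition~3.6, not from PBW.

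\paragraph{The diamond-lemma fallback.} This is only a sketch.
\begin{itemize}
\item The listed rules leave non-PBW irreducible words such as $\ell e^2\ell$ and $e\ell f^2e$. So infinite families of rules are needed.
\item No compatible semigroup order is given, and neither lexicographic order makes $e\ell e\to\cdots$ decreasing.
\item Checking the overlap ambiguities, which is the real content, is not done.
\end{itemize}
For reference, the paper does not prove this statement; it cites Rosso's Proposition~4.4 and Theorem~4.7.
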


\begin{proof}
This is \cite[Proposition~4.4 and Theorem~4.7]{Ros18}.
\end{proof}

\begin{definition}\label{def:Ufd}
Let $\Ifd$ be the two-sided ideal of $\Umir$ generated by $P_d(k)$ and $(\ell-1)Q_d(k)$. We define
$$
\Ufd=\Umir/\Ifd.
$$
We use the same letters $e,f,k,k^{-1},\ell$ for the images of the generators in $\Ufd$.
\end{definition}

\begin{proposition}\label{prop:factor}
The epimorphism $\pi_d$ in \eqref{eq:epmorMUMS} factors uniquely through an epimorphism
\begin{equation}\label{eq:factorMUMS}
    \overline\pi_d:\Ufd\twoheadrightarrow\MS
\end{equation}
which sends $e,f,k^{\pm 1},\ell$ to $\te,\tf,{\tk}^{\pm 1},\tell$, respectively.
\end{proposition}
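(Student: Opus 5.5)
The argument is the standard universal property of a quotient by a two-sided ideal. By \eqref{eq:epmorMUMS}, $\pi_d$ is a surjective $\kk$-algebra homomorphism sending $e,f,k^{\pm1},\ell$ to $\te,\tf,\tk^{\pm1},\tell$. It therefore suffices to check that $\Ifd\subseteq\Ker\pi_d$. Since $\Ker\pi_d$ is a two-sided ideal and $\Ifd$ is the smallest two-sided ideal containing $P_d(k)$ and $(\ell-1)Q_d(k)$, this reduces to verifying that $\pi_d$ kills these two generators.

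Because $\pi_d$ is an algebra homomorphism, it commutes with evaluating polynomials. Hence $\pi_d(P_d(k))=P_d(\tk)$, and this vanishes by Lemma~\ref{lem:geomcalc}. Similarly,
$$\pi_d\bigl((\ell-1)Q_d(k)\bigr)=(\tell-1)Q_d(\tk),$$
which vanishes by Lemma~\ref{lem:geombound}. Thus $\Ifd\subseteq\Ker\pi_d$.

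By the universal property of the quotient, there is a homomorphism $\overline\pi_d:\Ufd\to\MS$ with $\overline\pi_d\circ p=\pi_d$, where $p:\Umir\to\Ufd$ is the canonical projection. This $\overline\pi_d$ sends the images of $e,f,k^{\pm1},\ell$ to $\te,\tf,\tk^{\pm1},\tell$.

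Surjectivity of $\overline\pi_d$ follows from that of $\pi_d$, or directly from Theorem~\ref{thm:rosgenMS}. Uniqueness holds because $p$ is surjective, so $\overline\pi_d$ is determined on all of $\Ufd$; equivalently, the images of the generators generate $\Ufd$.

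There is no real obstacle here: the substantive inputs are the two vanishing statements, and both are already established in Lemmas~\ref{lem:geomcalc} and~\ref{lem:geombound}.
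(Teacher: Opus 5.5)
Your proposal is correct and matches the paper's proof: both check that $\pi_d$ kills $P_d(k)$ and $(\ell-1)Q_d(k)$ using Lemmas~\ref{lem:geomcalc} and~\ref{lem:geombound}, then apply the universal property of the quotient. You also spell out surjectivity and uniqueness, which the paper leaves implicit.
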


\begin{proof}
Lemmas~\ref{lem:geomcalc} and \ref{lem:geombound} show that
$$
P_d(\tk)=0,
\qquad
(\tell-1)Q_d(\tk)=0.
$$
Thus, we have $\Ifd\subseteq\Ker\pi_d$. 
Now, the assertion follows from the universal property of quotient algebras.
\end{proof}

\subsection{Lagrange idempotents}\label{subsec:lagrange}

Since $v$ is an indeterminate, the elements $\alpha_0,\ldots,\alpha_d$ are pairwise distinct. 
For $0\le r\le d$, define
$$
L_r(X)=
\prod_{\substack{0\le s\le d\\s\ne r}}
\frac{X-\alpha_s}{\alpha_r-\alpha_s}
\in\kk[X].
$$
Then $L_r(\alpha_s)=\delta_{r,s}$.

\begin{lemma}\label{lem:poly}
For $0\le r,s\le d$, each of the following polynomials is divisible by $P_d(X)$:
$$
L_r(X)L_s(X)-\delta_{r,s}L_r(X),\quad \sum_{r=0}^{d}L_r(X)-1,\quad\text{and}\quad (X-\alpha_r)L_r(X).
$$
\end{lemma}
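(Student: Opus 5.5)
The plan is to prove each divisibility by showing that the polynomial in question has degree at most $d+1$ and vanishes at the $d+1$ roots of $P_d(X)$, or else to factor it explicitly.
Since $v$ is an indeterminate, $\alpha_0,\ldots,\alpha_d$ are pairwise distinct. Hence $P_d(X)=\prod_{s=0}^{d}(X-\alpha_s)$ is a product of distinct linear factors in $\kk[X]$. A polynomial $g\in\kk[X]$ is therefore divisible by $P_d$ if and only if $g(\alpha_s)=0$ for every $0\le s\le d$. This root criterion is the only tool needed; it follows from the factor theorem applied successively, using that the linear factors are pairwise coprime.

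\textbf{Third polynomial.} By definition,
$$
(X-\alpha_r)L_r(X)=\Bigl(\prod_{s\ne r}(\alpha_r-\alpha_s)\Bigr)^{-1}P_d(X).
$$
So it is a nonzero scalar multiple of $P_d(X)$ and is trivially divisible by it.

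\textbf{First two polynomials.} I evaluate each at $X=\alpha_t$ for $0\le t\le d$, using $L_r(\alpha_t)=\delta_{r,t}$.
\begin{itemize}
\item For the first polynomial, $L_r(\alpha_t)L_s(\alpha_t)-\delta_{r,s}L_r(\alpha_t)=\delta_{r,t}\delta_{s,t}-\delta_{r,s}\delta_{r,t}$. Both terms equal $1$ exactly when $r=s=t$, and are $0$ otherwise, so the difference is $0$.
\item For the second polynomial, $\sum_{r}L_r(\alpha_t)-1=\sum_r\delta_{r,t}-1=0$.
\end{itemize}
Both polynomials therefore vanish at all $\alpha_t$. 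By the root criterion, both are divisible by $P_d(X)$.

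There is no real obstacle here. The only point requiring care is the distinctness of the $\alpha_r$, which is what makes the root criterion valid; it holds because $v$ is generic.
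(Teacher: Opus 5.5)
Your proof is correct and follows the paper's argument: divisibility by a product of distinct linear factors is equivalent to vanishing at each $\alpha_t$, applied using $L_r(\alpha_t)=\delta_{r,t}$. The only differences are that you dispose of $(X-\alpha_r)L_r(X)$ by noting it equals $\bigl(\prod_{s\ne r}(\alpha_r-\alpha_s)\bigr)^{-1}P_d(X)$, whereas the paper evaluates it at the roots too, and that your opening mention of a degree bound $d+1$ is wrong for $L_r(X)L_s(X)$ (degree $2d$) but harmless, since the root criterion you actually state needs no degree hypothesis.
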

\begin{proof}
A polynomial in $\kk[X]$ is divisible by $P_d(X)$ if and only if it vanishes at all the distinct roots $\alpha_0,\ldots,\alpha_d$. 
For $0\le t\le d$, at $X=\alpha_t$, the first polynomial takes the value
$$
\delta_{r,t}\delta_{s,t}-\delta_{r,s}\delta_{r,t}=0,
$$
the second polynomial takes the value $1-1=0$, and the third polynomial takes the value $(\alpha_t-\alpha_r)\delta_{r,t}=0$.
The result follows.
\end{proof}

\begin{definition}\label{def:eps}
For $0\le r\le d$, define
$$
\eps_r=L_r(k)\in\Ufd.
$$
We set $\eps_j=0$ when $j\notin\{0,1,\ldots,d\}$.
\end{definition}

\begin{lemma}\label{lem:eps}
In $\Ufd$, we have
$$
\eps_r\eps_s=\delta_{r,s}\eps_r,
\qquad
\sum_{r=0}^{d}\eps_r=1,
\qquad
(k-\alpha_r)\eps_r=0.
$$
Moreover,
$$
k=\sum_{r=0}^{d}\alpha_r\eps_r,
\qquad
k^{-1}=\sum_{r=0}^{d}\alpha_r^{-1}\eps_r,
\qquad
\ell\eps_r=\eps_r\ell.
$$
\end{lemma}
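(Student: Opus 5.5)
The plan is to transport the polynomial identities of Lemma~\ref{lem:poly} into $\Ufd$ through the evaluation homomorphism $\kk[X]\to\Ufd$, $X\mapsto k$. This map is well defined because $k$ commutes with itself. It kills $P_d(X)$, since $P_d(k)$ lies in $\Ifd$ by Definition~\ref{def:Ufd}. Hence it factors through $\kk[X]/(P_d(X))$, and every polynomial divisible by $P_d(X)$ maps to $0$ in $\Ufd$.

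Applying this to the three polynomials of Lemma~\ref{lem:poly} gives the first three identities at once:
\[
\eps_r\eps_s-\delta_{r,s}\eps_r=0,\qquad \sum_{r=0}^{d}\eps_r-1=0,\qquad (k-\alpha_r)\eps_r=0 .
\]

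For the formula for $k$, I would write $k=k\cdot 1=\sum_r k\eps_r$ using $\sum_r\eps_r=1$. Then I apply $k\eps_r=\alpha_r\eps_r$, which is the third identity.

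For $k^{-1}$, I would set $y=\sum_r\alpha_r^{-1}\eps_r$. Each $\alpha_r=v^{2r-d}$ is nonzero, so $y$ is defined. By orthogonality,
\[
ky=\Bigl(\sum_r\alpha_r\eps_r\Bigr)\Bigl(\sum_s\alpha_s^{-1}\eps_s\Bigr)=\sum_r\eps_r=1 .
\]
Multiplying $ky=1$ on the left by $k^{-1}$, using (M1), gives $y=k^{-1}$.

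For $\ell\eps_r=\eps_r\ell$, note that $\eps_r=L_r(k)$ is a polynomial in $k$. Relation (M4) gives $k\ell=\ell k$, so $\ell$ commutes with every power of $k$. By linearity it then commutes with $L_r(k)$.

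None of these steps is a real obstacle. The only point needing care is that the denominators $\alpha_r-\alpha_s$ in $L_r$ are nonzero, which holds because $v$ is an indeterminate. One should also note that $(\ell-1)Q_d(k)$ plays no role in this lemma; only $P_d(k)=0$ is used.
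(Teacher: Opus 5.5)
Your proposal is correct and follows essentially the same route as the paper: substitute $X=k$ into the identities of Lemma~\ref{lem:poly} using $P_d(k)=0$, get $k=\sum_r\alpha_r\eps_r$ from $\sum_r\eps_r=1$ and $k\eps_r=\alpha_r\eps_r$, and deduce $\ell\eps_r=\eps_r\ell$ from $k\ell=\ell k$. The only cosmetic difference is the $k^{-1}$ step, where the paper multiplies $(k-\alpha_r)\eps_r=0$ by $k^{-1}$ to obtain $k^{-1}\eps_r=\alpha_r^{-1}\eps_r$ and then sums, whereas you check $k\cdot\sum_r\alpha_r^{-1}\eps_r=1$ directly; both arguments are valid.
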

\begin{proof}
Substitute $X=k$ in Lemma~\ref{lem:poly} and use $P_d(k)=0$. This gives the first three identities. Since $\sum_r\eps_r=1$,
$$
k=\sum_{r=0}^{d}k\eps_r
=\sum_{r=0}^{d}\alpha_r\eps_r.
$$
Multiplying $(k-\alpha_r)\eps_r=0$ by $k^{-1}$ gives $k^{-1}\eps_r=\alpha_r^{-1}\eps_r$, and the formula for $k^{-1}$ follows. Finally, $\ell$ commutes with $k$ and hence with every polynomial in $k$.
\end{proof}

\begin{lemma}\label{lem:boundary}
Let $c_d=Q_d(\alpha_0)$. Then $c_d\ne0$ and
$$
Q_d(k)=c_d\eps_0.
$$
Consequently, the relation $(\ell-1)Q_d(k)=0$ is equivalent to
$$
(\ell-1)\eps_0=0.
$$
\end{lemma}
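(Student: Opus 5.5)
The argument is a polynomial-divisibility computation in $\kk[X]$, the same as in Lemma~\ref{lem:poly}, followed by substitution $X=k$ in $\Ufd$.

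First, $c_d\neq 0$. We have $c_d=Q_d(\alpha_0)=\prod_{r=1}^{d}(\alpha_0-\alpha_r)$. Since $v$ is an indeterminate, $\alpha_0,\dots,\alpha_d$ are pairwise distinct, so every factor is nonzero. For $d=0$ the product is empty and $c_0=1$.

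Next, I will show that $Q_d(X)-c_dL_0(X)$ is divisible by $P_d(X)$. Two observations make this immediate:
\[
L_0(X)=\prod_{s=1}^{d}\frac{X-\alpha_s}{\alpha_0-\alpha_s}=\frac{Q_d(X)}{c_d}.
\]
So $Q_d(X)=c_dL_0(X)$ holds exactly in $\kk[X]$, not merely modulo $P_d$. Alternatively one can argue as in Lemma~\ref{lem:poly}: the difference vanishes at each $\alpha_t$, because $Q_d(\alpha_t)=0=c_d\delta_{0,t}$ for $t\ge1$ and both sides equal $c_d$ at $t=0$.

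Substituting $X=k$ then gives $Q_d(k)=c_dL_0(k)=c_d\eps_0$ in $\Ufd$, by Definition~\ref{def:eps}.

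For the equivalence, $(\ell-1)Q_d(k)=c_d(\ell-1)\eps_0$. Since $c_d$ is a nonzero scalar in the field $\kk$, one side vanishes if and only if the other does.

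The same reasoning shows that the two-sided ideals $\langle P_d(k),(\ell-1)Q_d(k)\rangle$ and $\langle P_d(k),(\ell-1)L_0(k)\rangle$ of $\Umir$ coincide. Each generator is a nonzero scalar multiple of the other, and the polynomial identity holds already in $\Umir$. So the two relations are interchangeable even at the level of defining ideals.

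There is no real obstacle here. The only point needing care is to use the genericity of $v$ to guarantee $c_d\ne0$, and to treat the case $d=0$ separately, where $Q_0=1$, $\eps_0=1$ and the relation reads $\ell=1$.
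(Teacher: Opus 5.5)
Your proof is correct, and it takes a slightly more direct route than the paper's.

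The paper first uses Lemma~\ref{lem:eps}, which rests on the Cartan relation $P_d(k)=0$, to get the spectral expansion $h(k)=\sum_{s=0}^{d}h(\alpha_s)\eps_s$ in $\Ufd$ for every $h(X)\in\kk[X]$. It then reads off $Q_d(k)=c_d\eps_0$ from the values $Q_d(\alpha_s)=c_d\delta_{s,0}$.

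You instead observe that $L_0(X)=Q_d(X)/c_d$ is an exact identity in $\kk[X]$. So $Q_d(k)=c_dL_0(k)$ already holds in $\Umir$, without using $P_d(k)=0$. This gives a slightly stronger statement: $(\ell-1)Q_d(k)$ and $(\ell-1)L_0(k)$ generate the same two-sided ideal of $\Umir$ itself, not merely the same ideal modulo $P_d(k)$.

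The paper's route is the uniform evaluation principle it reuses later, for instance for $h(K)$ in the proof of Theorem~\ref{thm:idemp}. Your shortcut is specific to $Q_d$, which happens to be a scalar multiple of a single Lagrange basis polynomial.

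Your treatment of $c_d\neq 0$ via genericity of $v$ and of the degenerate case $d=0$ is fine.
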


\begin{proof}
Similar to Lemma \ref{lem:geomcalc}, Lemma~\ref{lem:eps} gives that $h(k)=\sum_{s=0}^{d}h(\alpha_s)\eps_s$ for every $h(X)\in\kk[X]$. 
Taking $h(X)=Q_d(X)$, the values $Q_d(\alpha_s)$ vanish for $s\ge1$, while $Q_d(\alpha_0)\ne0$. Hence $Q_d(k)=c_d\eps_0$. Since $c_d$ is invertible in $\kk$, the two relations are equivalent.
\end{proof}

\subsection{Weight shifts and finite dimension}\label{subsec:finite}

\begin{lemma}\label{lem:components}
For $0\le r,s\le d$,
$$
(\alpha_s-v^2\alpha_r)\eps_se\eps_r=0,
\qquad
(\alpha_s-v^{-2}\alpha_r)\eps_sf\eps_r=0.
$$
\end{lemma}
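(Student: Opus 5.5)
The plan is to compute $k\,(\eps_s e\eps_r)$ in two ways, using relation (M2) together with Lemma~\ref{lem:eps}. Relation (M2) can be rewritten as $ke=v^2ek$ and $kf=v^{-2}fk$.

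For the first identity, start from the left. Since $\eps_s$ is a polynomial in $k$, it commutes with $k$, and Lemma~\ref{lem:eps} gives $k\eps_s=\alpha_s\eps_s$. Hence
$$k\,\eps_s e\eps_r=\alpha_s\,\eps_s e\eps_r .$$
Now move $k$ to the right instead. Using $ke=v^2ek$ and then $k\eps_r=\alpha_r\eps_r$, we get
$$k\,\eps_s e\eps_r=\eps_s\,ke\,\eps_r=v^2\,\eps_s e\,k\eps_r=v^2\alpha_r\,\eps_s e\eps_r .$$
Subtracting the two expressions yields $(\alpha_s-v^2\alpha_r)\eps_se\eps_r=0$.

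The second identity follows in exactly the same way, with $kf=v^{-2}fk$ in place of $ke=v^2ek$. This gives $(\alpha_s-v^{-2}\alpha_r)\eps_sf\eps_r=0$.

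No step here is a real obstacle. The only points to check are that $\eps_s$ commutes with $k$, which holds because it is a polynomial in $k$, and that the eigenvalue identity $(k-\alpha_r)\eps_r=0$ from Lemma~\ref{lem:eps} is valid in $\Ufd$ because $P_d(k)=0$ there.
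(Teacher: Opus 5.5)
Your proposal is correct and follows essentially the same route as the paper. Both compute $k\,\eps_se\eps_r$ twice: once via $k\eps_s=\alpha_s\eps_s$, and once by commuting $k$ past $\eps_s$ and then past $e$ using $ke=v^2ek$ and $k\eps_r=\alpha_r\eps_r$, with the $f$ case handled identically.
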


\begin{proof}
Since $\eps_s$ is a polynomial in $k$, we have $k\eps_s=\eps_sk=\alpha_s\eps_s$. Using $ke=v^2ek$, we obtain
$$
k(\eps_se\eps_r)=\eps_ske\eps_r
=v^2\eps_se k\eps_r=v^2\alpha_r\eps_se\eps_r.
$$
The left-hand side is also $\alpha_s\eps_se\eps_r$. This proves the first identity. 
The second identity follows in the same way from $kf=v^{-2}fk$.
\end{proof}

\begin{lemma}\label{lem:shift}
For $0\le r\le d$, we have
$$
e\eps_r=\eps_{r+1}e,
\qquad
f\eps_r=\eps_{r-1}f.
$$
In particular,
$$
e\eps_d=0,
\qquad
f\eps_0=0.
$$
\end{lemma}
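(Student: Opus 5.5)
The plan is to insert the identity $1=\sum_{s}\eps_s$ on the left of $e\eps_r$ and then discard every component except one, using Lemma~\ref{lem:components}. By Lemma~\ref{lem:eps}, $\sum_{s=0}^{d}\eps_s=1$, so
$$
e\eps_r=\sum_{s=0}^{d}\eps_s e\eps_r .
$$
By Lemma~\ref{lem:components}, $(\alpha_s-v^2\alpha_r)\eps_se\eps_r=0$. Now $v^2\alpha_r=v^{2(r+1)-d}$, which equals $\alpha_{r+1}$ when $r+1\le d$. Since $v$ is an indeterminate, $\alpha_s=v^{2(r+1)-d}$ holds only when $s=r+1$. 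For every other $s\in\{0,\dots,d\}$ the scalar $\alpha_s-v^2\alpha_r$ is a nonzero element of the field $\kk$, so $\eps_se\eps_r=0$. This leaves
$$
e\eps_r=\eps_{r+1}e\eps_r ,
$$
with the convention that the right-hand side is $0$ when $r=d$. In that case no $s\le d$ satisfies $\alpha_s=v^{d+2}$, so every term vanishes.

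Next apply the same argument on the other side. Write $\eps_{r+1}e=\sum_{t}\eps_{r+1}e\eps_t$. The term $\eps_{r+1}e\eps_t$ can be nonzero only if $\alpha_{r+1}=v^2\alpha_t$, that is, only if $t=r$, by the same distinctness argument. Hence $\eps_{r+1}e=\eps_{r+1}e\eps_r$ for $0\le r\le d-1$, and combining with the previous display gives $e\eps_r=\eps_{r+1}e$. For $r=d$ we already have $e\eps_d=0=\eps_{d+1}e$, using the convention $\eps_{d+1}=0$ from Definition~\ref{def:eps}.

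The statement for $f$ is symmetric. Use the second identity of Lemma~\ref{lem:components} and the equality $v^{-2}\alpha_r=\alpha_{r-1}$. This gives $f\eps_r=\eps_{r-1}f\eps_r=\eps_{r-1}f$ for $1\le r\le d$, and $f\eps_0=0=\eps_{-1}f$.

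The only delicate point is the bookkeeping at the boundary indices $r=d$ for $e$ and $r=0$ for $f$, together with the claim that the scalars $\alpha_s-v^{\pm2}\alpha_r$ are nonzero. Both follow from the distinctness of the powers $v^{m}$ in $\kk=\mathbb C(v)$.
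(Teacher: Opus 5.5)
Your proposal is correct and follows essentially the same route as the paper. Both arguments insert $1=\sum_s\eps_s$ on the left of $e\eps_r$ and then on the right of $\eps_{r+1}e$, kill the off-diagonal components via Lemma~\ref{lem:components} and the distinctness of the $\alpha_s$, and treat the boundary cases $r=d$ (for $e$) and $r=0$ (for $f$) by noting that $v^{\pm(d+2)}$ is not a root of $P_d$.
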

\begin{proof}
Fix $0\le r<d$. Since $1=\sum_s\eps_s$,
$$
e\eps_r=\sum_{s=0}^{d}\eps_se\eps_r.
$$
Note that $v^2\alpha_r=\alpha_{r+1}$. Lemma~\ref{lem:components} and the distinctness of $\alpha_s$ show that every summand $\eps_se\eps_r$ is zero except the one with $s=r+1$. Thus
$$
e\eps_r=\eps_{r+1}e\eps_r.
$$
On the other hand,
$$
\eps_{r+1}e=\sum_{t=0}^{d}\eps_{r+1}e\eps_t.
$$
The same calculation shows that only the term with $t=r$ can be nonzero. Hence
$$
\eps_{r+1}e=\eps_{r+1}e\eps_r=e\eps_r.
$$
If $r=d$, then $v^2\alpha_d=v^{d+2}$ is not among the roots $\alpha_s$. Lemma~\ref{lem:components} gives $\eps_se\eps_d=0$ for every $s$, and hence $e\eps_d=0=\eps_{d+1}e$.

The proof for $f$ is the same. For $1\le r\le d$, one uses $v^{-2}\alpha_r=\alpha_{r-1}$. For $r=0$, the value $v^{-2}\alpha_0=v^{-d-2}$ is not among the roots. Thus $f\eps_0=0=\eps_{-1}f$.
\end{proof}

\begin{corollary}\label{cor:nil}
In $\Ufd$, we have
$$
e^{d+1}=0,
\qquad
f^{d+1}=0.
$$
\end{corollary}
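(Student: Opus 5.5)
The plan is to iterate the shift relations of Lemma~\ref{lem:shift} and then use the partition of unity $\sum_{r=0}^{d}\eps_r=1$ from Lemma~\ref{lem:eps}. Applying $e\eps_r=\eps_{r+1}e$ repeatedly, I will show by induction on $m\ge0$ that
$$
e^m\eps_r=\eps_{r+m}e^m \qquad (0\le r\le d),
$$
where $\eps_j=0$ for $j\notin\{0,\ldots,d\}$ as in Definition~\ref{def:eps}. The inductive step is
$$
e^{m+1}\eps_r=e\,\eps_{r+m}e^m=\eps_{r+m+1}e^{m+1},
$$
valid whenever $r+m\le d$. If instead $r+m>d$, then already $e^m\eps_r=0$, so both sides vanish. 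The boundary case $r+m=d$ is supplied by $e\eps_d=0$ from Lemma~\ref{lem:shift}.

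With this in hand, I will compute
$$
e^{d+1}=e^{d+1}\sum_{r=0}^{d}\eps_r=\sum_{r=0}^{d}\eps_{r+d+1}e^{d+1}.
$$
Every index satisfies $r+d+1\ge d+1$, so every summand is zero. Hence $e^{d+1}=0$.

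The argument for $f$ is symmetric. Iterating $f\eps_r=\eps_{r-1}f$ gives $f^m\eps_r=\eps_{r-m}f^m$, using $f\eps_0=0$ at the boundary. Then
$$
f^{d+1}=\sum_{r=0}^{d}\eps_{r-d-1}f^{d+1}=0,
$$
because every index satisfies $r-d-1\le -1$.

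No step is a real obstacle. The only point needing care is the convention $\eps_j=0$ outside $\{0,\ldots,d\}$: the iterated identity must remain valid once the index leaves this range, and this is exactly what the boundary equalities $e\eps_d=0$ and $f\eps_0=0$ of Lemma~\ref{lem:shift} guarantee.
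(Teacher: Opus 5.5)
Your proposal is correct and follows essentially the same argument as the paper: iterate the shift relations of Lemma~\ref{lem:shift} to get $e^a\eps_r=\eps_{r+a}e^a$ and $f^a\eps_r=\eps_{r-a}f^a$, then take $a=d+1$ and sum over $r$ using $\sum_r\eps_r=1$. Your explicit handling of the indices that leave $\{0,\ldots,d\}$ in the induction makes precise the step the paper summarises as ``repeated use''.
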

\begin{proof}
Repeated use of Lemma~\ref{lem:shift} gives
$$
e^a\eps_r=\eps_{r+a}e^a,
\qquad
f^a\eps_r=\eps_{r-a}f^a
$$
for $a\ge0$ and $0\le r\le d$. Take $a=d+1$. Then $\eps_{r+d+1}=0$ and $\eps_{r-d-1}=0$. Hence
$$
e^{d+1}\eps_r=0,
\qquad
f^{d+1}\eps_r=0
$$
for every $r$. Summing over $r$ proves the result.
\end{proof}

\begin{proposition}\label{prop:finite}
The algebra $\Ufd$ is finite-dimensional over $\kk$.
\end{proposition}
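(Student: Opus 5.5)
The plan is to push Rosso's PBW basis (Theorem~\ref{thm:pbw}) down to $\Ufd$. Since $\Ufd$ is a quotient of $\Umir$, the images of $\mathcal B_0\cup\cdots\cup\mathcal B_5$ span $\Ufd$. It is therefore enough to show that only finitely many of these images are needed, and that they span a finite-dimensional subspace.

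The first step is to handle the powers of $k$. By Lemma~\ref{lem:eps}, every power $k^t$ with $t\in\mathbb Z$ equals $\sum_{r=0}^d\alpha_r^t\eps_r$. Hence $k^t$ lies in the $\kk$-span of $\eps_0,\ldots,\eps_d$, and equivalently in the span of $1,k,\ldots,k^d$, since the $\eps_r$ are polynomials of degree $\le d$ in $k$. So each PBW monomial $x\,k^t$ is a linear combination of $x\,k^j$ with $0\le j\le d$.

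The second step is to handle the powers of $e$ and $f$. By Corollary~\ref{cor:nil}, any monomial containing $e^b$ with $b\ge d+1$ or $f^a$ with $a\ge d+1$ vanishes in $\Ufd$. Every element of $\mathcal B_0,\ldots,\mathcal B_5$ is a word in which $e^b$ and $f^a$ appear as contiguous factors. So all monomials with $a>d$ or $b>d$ become zero.

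Combining the two steps, $\Ufd$ is spanned by the images of the six families with $0\le a,b\le d$ and $0\le t\le d$. This is a finite set of at most $6(d+1)^3$ elements, so $\Ufd$ is finite-dimensional. There is no real obstacle here. The only point needing care is that the PBW spanning property passes to quotients, which is immediate, together with noting that the $k^t$ reduction uses only $P_d(k)=0$ and the invertibility of each $\alpha_r$.
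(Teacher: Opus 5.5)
Your proposal is correct and matches the paper's proof: the images of Rosso's PBW basis span $\Ufd$, Corollary~\ref{cor:nil} truncates the exponents of $e$ and $f$ at $d$, and $P_d(k)=0$ reduces every $k^t$ to the span of $1,k,\ldots,k^d$. The only difference is cosmetic: you get the $k^t$ reduction from $k^{\pm1}=\sum_r\alpha_r^{\pm1}\eps_r$ in Lemma~\ref{lem:eps}, whereas the paper uses that $P_d$ has degree $d+1$ and nonzero constant term $(-1)^{d+1}$.
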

\begin{proof}
Take the images in $\Ufd$ of the PBW basis in Theorem~\ref{thm:pbw}. Corollary~\ref{cor:nil} allows the exponents of $e$ and $f$ to be restricted to $0,1,\ldots,d$. 
Since $P_d(X)$ is a polynomial of degree $d+1$ with nonzero constant term $(-1)^{d+1}$, 
the relation $P_d(k)=0$ reduces every $k^t$, with $t\in\mathbb Z$, to a linear combination of $1,k,\ldots,k^d$. 
The six PBW families contain only finitely many possible positions of $\ell$. 
Their images therefore give a finite spanning set of $\Ufd$.
\end{proof}

\section{Simple modules for \texorpdfstring{$\mathbf U(2,d)$}{U(2,d)}}\label{sec:simple}
\subsection{Rosso's simple modules}\label{subsec:rossomod}

Let $U_v(\mathfrak{sl}_2)$ be the subalgebra of $\Umir$ generated by $e$, $f$, $k$, $k^{-1}$. For $\delta\in\{0,1\}$, let
$$
\pi_\delta:\Umir\longrightarrow U_v(\mathfrak{sl}_2)
$$
be the epimorphism that fixes $e,f,k,k^{-1}$ and sends $\ell$ to $\delta$. Let $L^{\pm}(m)$ be the simple $U_v(\mathfrak{sl}_2)$-module of highest $k$-weight $\pm v^m$, where $m\in\mathbb N$ (cf. \cite{Jan96}).

We define
$$
L^{\pm}(m,0)=\pi_0^*L^{\pm}(m),
\qquad
L^{\pm}(m,1)=\pi_1^*L^{\pm}(m).
$$
Thus, $L^{\pm}(m,0)$ and $L^{\pm}(m,1)$ are the pullbacks of the standard $(m+1)$-dimensional simple $U_v(\mathfrak{sl}_2)$-module along $\pi_0$ and $\pi_1$, respectively. 
The element $\ell$ acts as $0$ on the first module and as the identity on the second.
Their $k$-eigenvalues are
$$
\{\pm v^{m-2i}\mid0\le i\le m\}.
$$

For $m\ge1$, \cite[Proposition~5.4]{Ros18} also defines the simple modules $L^{\pm}(m,01)$ with basis
$$
\{u^{\pm}_{i,0}\mid0\le i\le m-1\}
\ \sqcup\
\{u^{\pm}_{j,1}\mid1\le j\le m\}.
$$
For every basis vector $u^{\pm}_{i,\epsilon}$ occurring above, where $\epsilon\in\{0,1\}$, the actions of $k$ and $\ell$ are
$$
ku^{\pm}_{i,\epsilon}=\pm v^{m-2i}u^{\pm}_{i,\epsilon},
\qquad
\ell u^{\pm}_{i,\epsilon}=\epsilon u^{\pm}_{i,\epsilon}.
$$
The actions of $e$ and $f$ are explicitly given in \cite[Proposition~5.4]{Ros18}. In particular, the highest $k$-eigenspace of $L^+(m,01)$ is $\kk u^+_{0,0}$, while the lowest $k$-eigenspace is $\kk u^+_{m,1}$. Moreover, $\dim_{\kk}L^{\pm}(m,01)=2m$.

\begin{theorem}[Rosso]\label{thm:rossosimple}
The following modules form a complete set of pairwise non-isomorphic finite-dimensional simple $\Umir$-modules:
$$
\mathrm{Irr}(\Umir)=\{L^{\pm}(m,0),\,\,L^{\pm}(m,1)\mid m\in\mathbb N\} \sqcup \{L^{\pm}(m,01)\mid m\in\mathbb N_{>0}\}.
$$
Moreover, every finite-dimensional $\Umir$-module is a direct sum of simple modules.
\end{theorem}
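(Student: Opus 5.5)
The plan is to prove Rosso's classification by combining the known representation theory of $U_v(\mathfrak{sl}_2)$ with the structure forced by the idempotent $\ell$. It has three parts: a weight decomposition compatible with $\ell$, a highest-weight argument for simple modules, and a separate complete-reducibility argument.

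\textbf{Weight decomposition.} Let $M$ be a finite-dimensional $\Umir$-module. Restricting to the subalgebra $U_v(\mathfrak{sl}_2)$, complete reducibility and the classification in \cite{Jan96} show that $k$ acts diagonalizably with eigenvalues $\pm v^n$, $n\in\mathbb Z$. Since $\ell^2=\ell$ and $k\ell=\ell k$ by (M4), each $k$-eigenspace $M_\lambda$ splits as $M_\lambda=M_\lambda^0\oplus M_\lambda^1$ according to the eigenvalue of $\ell$. Write $M^\epsilon=\bigoplus_\lambda M^\epsilon_\lambda$. The relation $\ell e\ell=\ell e$ is equivalent to $\ell e(1-\ell)=0$, and $\ell f\ell=f\ell$ is equivalent to $(1-\ell)f\ell=0$. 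Hence $e(M^0)\subseteq M^0$ and $f(M^1)\subseteq M^1$. The relations (M5) and (M6), applied to homogeneous vectors, then control the components $(1-\ell)eM^1$ and $\ell fM^0$.

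\textbf{Simple modules.} Take $M$ simple and let $\lambda=\pm v^m$ be the highest $k$-weight, so $eM_\lambda=0$. I would split into cases according to whether $M_\lambda$ meets $M^1$ or $M^0$.
\begin{enumerate}
\item[(a)] If $u\in M_\lambda^1$ is nonzero, then $f$ preserves $M^1$, so $\kk[f]u\subseteq M^1$. Using (M3) and $eu=0$, the span of the $f^iu$ is stable under $e$ and $k$, so it is a submodule on which $\ell=1$. Simplicity gives $M\cong L^{\pm}(m,1)$.
\item[(b)] If $M_\lambda=M_\lambda^0$, take a nonzero $u\in M_\lambda^0$ and consider $\ell f^iu$ and $(1-\ell)f^iu$. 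Using (M6) inductively in the form
\[
[2]f\ell f=v^{-1}\ell f^2+vf^2\ell,
\]
I would show that the vectors $(1-\ell)f^iu$, $0\le i\le m-1$, together with $\ell f^ju$, $1\le j\le m$, span a submodule. If all the $\ell f^ju$ vanish, this gives $L^{\pm}(m,0)$. Otherwise one gets a $2m$-dimensional module, and matching the formulas of \cite[Proposition~5.4]{Ros18} identifies it with $L^{\pm}(m,01)$. The case split here is whether $\ell fu=0$.
\end{enumerate}
Pairwise non-isomorphism then follows from comparing dimensions and the highest weight together with the $\ell$-eigenvalue on the highest weight space and on the lowest weight space. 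Conversely, each listed module is simple. For $L^{\pm}(m,\epsilon)$ this holds because it is already simple over $U_v(\mathfrak{sl}_2)$. For $L^{\pm}(m,01)$, any nonzero submodule contains a weight vector, and applying $e$ and $f$ together with the explicit formulas reaches $u^{\pm}_{0,0}$.

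\textbf{Complete reducibility.} I expect this to be the main obstacle, since $\ell$ breaks the usual Casimir argument. My plan is to show $\mathrm{Ext}^1(S,T)=0$ for all pairs of simples in the list. Given an extension $0\to T\to M\to S\to0$, the restriction to $U_v(\mathfrak{sl}_2)$ splits, and $M$ decomposes into $\ell$-eigenspaces compatible with $k$.
\begin{enumerate}
\item[(a)] If the highest weights of $S$ and $T$ differ, lift a highest weight vector of $S$ to a vector of the same $k$-weight and $\ell$-eigenvalue killed by $e$. This should be possible because the $U_v(\mathfrak{sl}_2)$-splitting already provides a highest weight vector, and projecting it by $\ell$ or $1-\ell$ preserves $e$-annihilation on $M^0$. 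The highest-weight analysis above then shows that this vector generates a copy of $S$ complementing $T$.
\item[(b)] If the highest weights coincide, I would distinguish the cases by the $\ell$-eigenvalues on the top and bottom weight spaces. I would use (M5) and (M6) to show that $\ell$ restricted to each isotypic $U_v(\mathfrak{sl}_2)$-component acts in a way forced by these eigenvalues, which leaves no room for a non-split extension.
\end{enumerate}
Should this case analysis become unwieldy, the fallback is to exhibit a central element of $\Umir$ built from $\ell$ and the quantum Casimir that separates the simples with equal highest weight. The generalized eigenspaces of such an element would reduce complete reducibility to the case of isotypic extensions, which are then handled by the highest-weight argument.
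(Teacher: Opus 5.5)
The paper does not prove this statement. It quotes \cite[Theorem~5.8]{Ros18} for the classification and \cite[Theorem~5.13]{Ros18} for complete reducibility. Your reconstruction has genuine gaps.

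\textbf{Classification.} The weight decomposition, the inclusions $eM^0\subseteq M^0$ and $fM^1\subseteq M^1$, and your case (a) are fine. Case (b) is not.

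First, it is mis-indexed. If all $\ell f^ju$ vanish, you are left with $f^iu$ for $0\le i\le m-1$. That set is not $f$-stable and has dimension $m$, not $m+1$. You must include $(1-\ell)f^mu$ and prove that it vanishes exactly in the mixed case.

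Second, $e$-stability does not follow from (M6). From $efu=[\lambda]u$, with $[\lambda]=(\lambda-\lambda^{-1})/(v-v^{-1})$, you get $e(1-\ell)fu=[\lambda]u-e\ell fu$, where $e\ell fu\in M_\lambda=M^0_\lambda$. So your span is $e$-stable only if $u$ is an eigenvector of $e\ell f$ on $M^0_\lambda$. You have not shown this space to be one-dimensional. Hence $u$ must be chosen as such an eigenvector, and its eigenvalue $\gamma$ must be shown to lie in $\kk$.

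The dichotomy is governed by $\gamma$, not merely by whether $\ell fu=0$. If $\gamma=0$, the vectors $\ell f^ju$ span a submodule inside $M^1$; it misses $u$, so simplicity forces it to be zero. If $\gamma\neq0$, write $e(1-\ell)f^ju=\beta_j(1-\ell)f^{j-1}u$. At the first $N$ with $(1-\ell)f^Nu=0$ one has $\beta_N=0$. Together with the symmetry of the $U_v(\mathfrak{sl}_2)$-character, this forces $\gamma=v^{1-m}$ (for $\lambda=v^m$) and dimension $2m$. This computation is the real content of the classification, and ``matching the formulas'' skips it.

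\textbf{Complete reducibility.} This is the more serious gap. The one concrete step in your case (a) is false: projecting a $U_v(\mathfrak{sl}_2)$-highest weight vector by $\ell$ or $1-\ell$ need not keep it annihilated by $e$. For $w=w_0+w_1$ with $w_\epsilon\in M^\epsilon$ and $ew=0$, the relation $\ell e(1-\ell)=0$ only yields $\ell ew_1=0$ and $ew_0=-ew_1$.

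Here is a counterexample. In $L^+(m,01)$ with $m\ge2$, neither $u^+_{1,0}$ nor $u^+_{1,1}$ is killed by $e$; either one would otherwise generate a proper submodule with weights at most $v^{m-2}$. So the $U_v(\mathfrak{sl}_2)$-highest weight vector $w$ of weight $v^{m-2}$ has two nonzero $\ell$-components, and neither is killed by $e$. Now take the split module $L^+(m,01)\oplus L^+(m-2,0)$, and choose the $U_v(\mathfrak{sl}_2)$-complement to be the graph of a nonzero map $L(m-2)\to L^+(m,01)$. This gives the lift $s_0+w$ of the top vector $s_0$ of $L^+(m-2,0)$, and $e(1-\ell)(s_0+w)\neq0$. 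So when $S$ has lower highest weight than $T$, your recipe does not produce a valid lift. Producing one is exactly the splitting you are trying to prove.

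Your case (b) of this part contains no argument. The fallback central element is never exhibited. The quantum Casimir is not central in $\Umir$: it has two distinct eigenvalues on the simple module $L^+(m,01)\cong L(m)\oplus L(m-2)$ when $m\ge2$.

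\textbf{A possible repair.} The anti-involution of $\Umir$ that fixes $k$ and $\ell$ and swaps $e$ and $f$ respects (M1)--(M6). The simples are self-dual under the resulting contragredient duality, since duality preserves $(k,\ell)$-characters and the listed simples have distinct ones. So you may reduce to extensions whose quotient has highest weight at least that of the submodule; extensions between modules of opposite sign split by weights. In that situation every $(k,\ell)$-eigenvector lifting the top vector is killed by $e$. The corrected highest-weight analysis then finishes the argument. When the highest weights coincide, that analysis must include showing that $e\ell f$ acts semisimply on the top weight space.
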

\begin{proof}
The classification is \cite[Theorem~5.8]{Ros18}, and the semisimplicity is \cite[Theorem~5.13]{Ros18}.  A version of general rank is given in \cite{GR26}.
\end{proof}

\subsection{Modules satisfying the Cartan relation}\label{subsec:kmodules}

Let $M$ be a $\Umir$-module. We say that $M$ satisfies $P_d(k)=0$ if
$$
P_d(k)z=0
$$
for every $z\in M$. This means that the operator induced by $P_d(k)$ is zero on $M$.

\begin{proposition}\label{prop:kdecomp}
Suppose that a $\Umir$-module $M$ satisfies the Cartan relation $P_d(k)=0$. For $0\le r\le d$, set
$$
M_r=\Ker(k-\alpha_r).
$$
Then
$$
M=\bigoplus_{r=0}^{d}M_r,
\qquad
\eps_rM=M_r.
$$
\end{proposition}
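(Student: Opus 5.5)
The plan is to work entirely with the operators on $M$ induced by polynomials in $k$, and to mirror the proof of Lemma~\ref{lem:eps} at the level of the module. Here $\eps_r$ acts on $M$ as the operator $L_r(k)$. This is meaningful even though $M$ is only a $\Umir$-module: the hypothesis says that $P_d(k)$ acts as zero, so every polynomial in $k$ divisible by $P_d(X)$ also acts as zero. By Lemma~\ref{lem:poly}, the operators $L_r(k)$ on $M$ therefore satisfy
$$
L_r(k)L_s(k)=\delta_{r,s}L_r(k),\qquad \sum_{r=0}^{d}L_r(k)=1,\qquad (k-\alpha_r)L_r(k)=0 .
$$
Equivalently, the action of $\Umir$ on $M$ factors through the quotient by the ideal generated by $P_d(k)$, where these identities hold exactly as in Lemma~\ref{lem:eps}.

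First I will show $\eps_rM\subseteq M_r$. This is immediate from $(k-\alpha_r)L_r(k)=0$.

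Next I will show $M_r\subseteq\eps_rM$. If $z\in M_r$, then $kz=\alpha_r z$, so $h(k)z=h(\alpha_r)z$ for every $h(X)\in\kk[X]$. Taking $h=L_r$ gives $\eps_r z=L_r(\alpha_r)z=z$, so $z\in\eps_rM$. This proves $\eps_rM=M_r$.

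For the decomposition, every $z\in M$ can be written as $z=\sum_r\eps_r z$ with $\eps_r z\in M_r$, so $M=\sum_r M_r$. To see that the sum is direct, suppose $\sum_r z_r=0$ with $z_r\in M_r$. Apply $\eps_s$: since $\eps_s z_r=L_s(\alpha_r)z_r=\delta_{s,r}z_r$, we get $z_s=0$ for each $s$. Alternatively, one can cite the fact that eigenvectors for the distinct eigenvalues $\alpha_0,\dots,\alpha_d$ are linearly independent; the distinctness holds because $v$ is generic.

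I do not expect a real obstacle. The only point needing care is that the $\eps_r$ are defined in $\Ufd$, while $M$ is a $\Umir$-module. The justification is the one given above: the relation $P_d(k)=0$ on $M$ is enough for the Lagrange identities of Lemma~\ref{lem:poly} to hold as operator identities on $M$.
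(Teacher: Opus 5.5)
Your proposal is correct and follows essentially the same route as the paper. In both, the identities of Lemma~\ref{lem:poly} hold as operator identities on $M$ because $P_d(k)$ acts as zero, which gives $M=\sum_r\eps_rM$ and $\eps_rM\subseteq M_r$. The evaluation $\eps_s z=L_s(\alpha_r)z=\delta_{s,r}z$ on vectors $z\in M_r$ then yields both the reverse inclusion and the directness of the sum.
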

\begin{proof}
The polynomial identities in Lemma~\ref{lem:poly} act on $M$. Hence
$$
\eps_r\eps_s=\delta_{r,s}\eps_r,
\qquad
\sum_{r=0}^{d}\eps_r=1,
\qquad
(k-\alpha_r)\eps_r=0
$$
as endomorphisms of $M$. It follows that $M=\sum_r\eps_rM$ and $\eps_rM\subseteq M_r$.

Conversely, take $z\in M_r$. Since $kz=\alpha_rz$, polynomial evaluation gives
$$
\eps_sz=L_s(k)z=L_s(\alpha_r)z=\delta_{s,r}z.
$$
Thus $z=\eps_rz$, so $M_r\subseteq\eps_rM$. The equality follows. If $z_0+\cdots+z_d=0$ with $z_r\in M_r$, applying $\eps_s$ gives $z_s=0$. Hence, the sum is direct.
\end{proof}

Thus, $\eps_r$ is the Lagrange idempotent projecting $M$ onto the $\alpha_r$-eigenspace of $k$. In particular, $\eps_0M=\Ker(k-v^{-d})$.

Define
\begin{equation}\label{eq:calId}
    \Iset_d=
\{m\in\mathbb N\mid0\le m\le d,\ m\equiv d\pmod2\},
\end{equation}
and set $\Iset_t=\varnothing$ for $t<0$.

\begin{lemma}\label{lem:cartanmodules}
Let $\bullet\in \{0,1,01\}$. 
A positive simple module $L^+(m,\bullet)$ satisfies $P_d(k)=0$ if and only if $m\in\Iset_d$. No negative simple module $L^-(m,\bullet)$ satisfies $P_d(k)=0$.
\end{lemma}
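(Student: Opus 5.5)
The criterion I will use is this: on a module where $k$ acts diagonalizably, the operator $P_d(k)$ vanishes exactly when every $k$-eigenvalue is a root of $P_d$. Each of Rosso's simple modules in Theorem~\ref{thm:rossosimple} has a basis of $k$-eigenvectors. For $L^\pm(m,0)$ and $L^\pm(m,1)$ this is the weight basis of $L^\pm(m)$. For $L^\pm(m,01)$ it is the basis $u^\pm_{i,\epsilon}$. On such a basis vector $z$ with eigenvalue $\lambda$ we get $P_d(k)z=P_d(\lambda)z$. Hence $P_d(k)=0$ on the module if and only if every occurring eigenvalue lies in $\{\alpha_0,\ldots,\alpha_d\}=\{v^{2r-d}\mid 0\le r\le d\}$. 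The whole lemma therefore reduces to comparing eigenvalue sets.

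First I list the eigenvalue sets. For $L^\pm(m,0)$ and $L^\pm(m,1)$ the set is $\{\pm v^{m-2i}\mid 0\le i\le m\}$, as stated above. For $L^\pm(m,01)$ the basis vectors $u^\pm_{i,0}$ ($0\le i\le m-1$) and $u^\pm_{j,1}$ ($1\le j\le m$) together realise every index $i$ with $0\le i\le m$. The eigenvalues $\pm v^{m-2i}$ therefore again run over the full set $\{\pm v^{m-2i}\mid 0\le i\le m\}$. So in all three cases $\bullet\in\{0,1,01\}$ the eigenvalue set is $S^\pm_m=\{\pm v^{m-2i}\mid 0\le i\le m\}$.

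Negative case: $-v^{m}$ is never equal to any $v^{2r-d}$, because $v$ is an indeterminate in $\kk=\mathbb C(v)$. Hence $S^-_m$ is never contained in the root set, and no $L^-(m,\bullet)$ satisfies $P_d(k)=0$.

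Positive case: the condition is $\{v^{m-2i}\mid 0\le i\le m\}\subseteq\{v^{2r-d}\mid 0\le r\le d\}$. Comparing exponents, this says $\{m-2i\}\subseteq\{2r-d\}$. If $m\in\Iset_d$, then $m-2i=2r-d$ with $r=(d+m)/2-i$, and this $r$ is an integer in $[0,d]$ because $m\equiv d \pmod 2$ and $0\le m\le d$. Conversely, if the inclusion holds, then the top exponent satisfies $m\le d$ and has the parity of $d$, so $m\in\Iset_d$.

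The only point needing care is the $01$ case. There I must confirm from the description of the basis that both extremes $i=0$ and $i=m$ occur. They do: $u_{0,0}$ has eigenvalue $\pm v^{m}$ and $u_{m,1}$ has eigenvalue $\pm v^{-m}$. In fact only the top eigenvalue is needed, together with the fact that all other exponents are congruent to $m$ mod $2$ and lie in $[-m,m]$. Everything else is routine.
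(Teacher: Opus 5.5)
Your proof is correct and follows essentially the same route as the paper: reduce to checking that every $k$-eigenvalue $\pm v^{m-2i}$ is a root $v^{2r-d}$ of $P_d$, compare exponents using that $v$ is an indeterminate (taking $r=d-t-i$ when $m=d-2t$), and rule out the negative modules because $-v^a\neq v^b$. You are slightly more explicit than the paper in two places: you justify why it suffices to look at eigenvalues (diagonalizability of $k$), and you check that $L^\pm(m,01)$ realises the full eigenvalue set; the paper takes both points for granted.
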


\begin{proof}
On a positive simple module, the set of $k$-eigenvalues is
$$
\{v^m,v^{m-2},\ldots,v^{-m}\}.
$$
If these eigenvalues are roots of $P_d$, then $v^m=v^{2r-d}$ for some $0\le r\le d$. Since $v$ is an indeterminate, $m=2r-d$. Thus $m\le d$ and $m\equiv d\pmod2$.

Conversely, suppose that $m=d-2t$ for some $t\ge0$. For $0\le i\le m$, set $r=d-t-i$. Then $0\le r\le d$ and
$$
2r-d=m-2i.
$$
Every $k$-eigenvalue is therefore a root of $P_d$.

On a negative simple module, every $k$-eigenvalue has the form $-v^a$, while every root of $P_d$ has the form $v^b$. The equality $-v^a=v^b$ cannot hold in $\mathbb C(v)$. Thus, no negative simple module satisfies the relation.
\end{proof}

\subsection{The second relation and the complete list}\label{subsec:boundarysimple}

By Lemma~\ref{lem:boundary}, the second relation is equivalent to
$$
\ell\eps_0=\eps_0.
$$
On a module satisfying $P_d(k)=0$, 
Proposition~\ref{prop:kdecomp} identifies $\eps_0M$ with $\Ker(k-v^{-d})$. Hence, the second relation requires $\ell$ to act as the identity on this eigenspace.

\begin{lemma}\label{lem:lowspace}
Let $m\in\Iset_d$.
\begin{enumerate}
\item[(1)] If $m<d$, then $\Ker(k-v^{-d})=0$ on each of $L^+(m,0)$, $L^+(m,1)$, and $L^+(m,01)$.
\item[(2)] If $m=d$, then $\eps_0$ projects $L^+(d,0)$ and $L^+(d,1)$ onto their one-dimensional $v^{-d}$-eigenspaces. It projects $L^+(d,01)$ onto $\kk u^+_{d,1}$.
\end{enumerate}
\end{lemma}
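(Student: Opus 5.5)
The plan is to read off the $k$-eigenvalues from the explicit description of the simple modules in \S\ref{subsec:rossomod}. Then we identify $\eps_0M$ with $\Ker(k-v^{-d})$ using Proposition~\ref{prop:kdecomp}. This proposition applies because, by Lemma~\ref{lem:cartanmodules}, each $L^+(m,\bullet)$ with $m\in\Iset_d$ satisfies $P_d(k)=0$.

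For part (1), take $m\in\Iset_d$ with $m<d$. On $L^+(m,0)$ and $L^+(m,1)$ the $k$-eigenvalues are $v^{m-2i}$ with $0\le i\le m$. On $L^+(m,01)$ the basis vectors $u^+_{i,\epsilon}$ have $k$-eigenvalues $v^{m-2i}$ with $0\le i\le m$ as well, since $i$ ranges over $0,\dots,m-1$ for $\epsilon=0$ and over $1,\dots,m$ for $\epsilon=1$. So every eigenvalue has the form $v^a$ with $a\ge -m>-d$. Because $v$ is an indeterminate, $v^a\ne v^{-d}$. Since $k$ acts diagonalizably in each case (on the given bases), the kernel of $k-v^{-d}$ is zero.

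For part (2), take $m=d$. On $L^+(d,0)$ and $L^+(d,1)$, the underlying $U_v(\mathfrak{sl}_2)$-module $L^+(d)$ has one-dimensional weight spaces. The eigenvalue $v^{-d}$ occurs exactly once, at $i=d$. So $\Ker(k-v^{-d})$ is one-dimensional, and by Proposition~\ref{prop:kdecomp} $\eps_0$ is the projection onto it along the remaining eigenspaces. On $L^+(d,01)$, the basis vectors with eigenvalue $v^{d-2i}=v^{-d}$ must have $i=d$. Among $u^+_{i,0}$ ($i\le d-1$) and $u^+_{j,1}$ ($1\le j\le d$), the only such vector is $u^+_{d,1}$. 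Hence $\Ker(k-v^{-d})=\kk u^+_{d,1}$, and again Proposition~\ref{prop:kdecomp} gives $\eps_0L^+(d,01)=\kk u^+_{d,1}$.

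There is no real obstacle here. The only point needing care is that the basis $u^{\pm}_{i,\epsilon}$ of $L^+(m,01)$ is a $k$-eigenbasis, so eigenspaces are spanned by the basis vectors with the matching eigenvalue. This holds by the stated formula $ku^{\pm}_{i,\epsilon}=\pm v^{m-2i}u^{\pm}_{i,\epsilon}$ together with the distinctness of the powers of $v$.
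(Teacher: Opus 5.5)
Your proposal is correct and follows the paper's proof. You read off the $k$-eigenvalues $v^{m-2i}$ ($0\le i\le m$) from the weight bases, observe that $v^{-d}$ cannot occur when $m<d$, and for $m=d$ isolate the index $i=d$, which in $L^+(d,01)$ appears only in $u^+_{d,1}$; your explicit use of Lemma~\ref{lem:cartanmodules} and Proposition~\ref{prop:kdecomp} to identify $\eps_0M$ with $\Ker(k-v^{-d})$ spells out what the paper states in the paragraph just before the lemma.
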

\begin{proof}
The eigenvalues on each positive simple module have the form $v^{m-2i}$ with $0\le i\le m$. If $v^{m-2i}=v^{-d}$, then $i=(m+d)/2$. When $m<d$, this number is greater than $m$, so no such eigenvalue occurs. This proves part~(1).

If $m=d$, the eigenvalue $v^{-d}$ is the lowest eigenvalue of the standard $(d+1)$-dimensional module and its eigenspace is one-dimensional. 
The mixed module $L^+(d,01)$ has basis $\{u^+_{i,0}\mid0\le i\le d-1\}
\sqcup
\{u^+_{j,1}\mid1\le j\le d\}$. 
Thus, the index $i=d$ occurs only in the vector $u^+_{d,1}$. This proves part~(2).
\end{proof}

\begin{theorem}\label{thm:simples}
Up to isomorphism, the simple $\Ufd$-modules are precisely
\begin{align*}
\mathrm{Irr}(\Ufd)&=\{L^+(m,1)\mid m\in\Iset_d\}\\
&\quad \sqcup \{L^+(m,01)\mid m\in\Iset_d,\ m\ge1\}\\
&\quad \sqcup \{L^+(m,0)\mid m\in\Iset_{d-2}\}.
\end{align*}
\end{theorem}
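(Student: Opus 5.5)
A simple $\Ufd$-module is the same thing as a simple $\Umir$-module annihilated by $\Ifd$. Such a module is finite-dimensional by Proposition~\ref{prop:finite}, since it is a quotient of $\Ufd$. So Theorem~\ref{thm:rossosimple} applies: it suffices to go through Rosso's list $\mathrm{Irr}(\Umir)$ and decide, for each module, whether both $P_d(k)$ and $(\ell-1)Q_d(k)$ act as zero. Distinct modules in Rosso's list stay non-isomorphic as $\Ufd$-modules, because the pullback along $\Umir\twoheadrightarrow\Ufd$ is fully faithful. Hence the answer is a disjoint union as stated.

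\emph{First relation.} Lemma~\ref{lem:cartanmodules} shows that $P_d(k)=0$ holds exactly on the modules $L^+(m,\bullet)$ with $m\in\Iset_d$ and $\bullet\in\{0,1,01\}$. For $\bullet=01$ we additionally need $m\ge1$, since $L^{\pm}(m,01)$ is only defined for $m\ge1$. All negative modules are excluded.

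\emph{Second relation.} On a module satisfying $P_d(k)=0$, Lemma~\ref{lem:boundary} together with Proposition~\ref{prop:kdecomp} turns the second relation into the condition that $\ell$ acts as the identity on $\eps_0M=\Ker(k-v^{-d})$. We split into cases using Lemma~\ref{lem:lowspace}.
\begin{enumerate}
\item[(a)] If $m\in\Iset_d$ and $m<d$, then $\eps_0M=0$, so the condition holds vacuously. All three modules $L^+(m,0)$, $L^+(m,1)$, $L^+(m,01)$ survive. Here $m<d$ with $m\equiv d\pmod 2$ means exactly $m\in\Iset_{d-2}$.
\item[(b)] If $m=d$, then $\eps_0M$ is one-dimensional in each case.
  \begin{enumerate}
  \item[$\bullet$] On $L^+(d,1)$ we have $\ell=1$, so the condition holds.
  \item[$\bullet$] On $L^+(d,01)$ the space $\eps_0M$ is spanned by $u^+_{d,1}$, and $\ell u^+_{d,1}=u^+_{d,1}$, so the condition holds (when $d\ge1$).
  \item[$\bullet$] On $L^+(d,0)$ we have $\ell=0$ on a nonzero space, so the condition fails.
  \end{enumerate}
\end{enumerate}

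Collecting the cases: $L^+(m,1)$ survives for all $m\in\Iset_d$, $L^+(m,01)$ survives for all $m\in\Iset_d$ with $m\ge1$, and $L^+(m,0)$ survives exactly for $m\in\Iset_d\setminus\{d\}=\Iset_{d-2}$. This is the stated list.

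The only potentially delicate point is the first reduction: that every simple $\Ufd$-module is finite-dimensional and therefore appears in Rosso's classification. This is handled by Proposition~\ref{prop:finite}, since a simple module is cyclic and hence a quotient of $\Ufd$. We also use the convention $\Iset_{-2}=\Iset_{-1}=\varnothing$ to cover the edge cases $d=0$ and $d=1$.
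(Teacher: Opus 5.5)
Your proposal is correct and follows the paper's proof essentially step for step. You reduce to Rosso's list via Proposition~\ref{prop:finite}, apply Lemma~\ref{lem:cartanmodules} for the Cartan relation, and use Lemma~\ref{lem:lowspace} to test $\ell\eps_0=\eps_0$ separately for $m<d$ and $m=d$; your remarks that a simple module is cyclic (hence a quotient of $\Ufd$) and that pullback along $\Umir\twoheadrightarrow\Ufd$ is fully faithful just make explicit two points the paper leaves implicit.
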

\begin{proof}
Proposition~\ref{prop:finite} shows that every simple $\Ufd$-module is finite-dimensional. Pulling it back along $\Umir\twoheadrightarrow\Ufd$ gives a simple $\Umir$-module. The module therefore belongs to Rosso's list $\mathrm{Irr}(\Umir)$ in Theorem~\ref{thm:rossosimple}.

Lemma~\ref{lem:cartanmodules} excludes all negative modules and restricts the positive parameter to $m\in\Iset_d$. It remains to test $\ell\eps_0=\eps_0$. If $m<d$, Lemma~\ref{lem:lowspace} gives $\eps_0=0$, so the relation holds automatically.

Suppose that $m=d$. On $L^+(d,0)$, the element $\ell$ acts as zero. Since $\eps_0$ has nonzero image, this module does not satisfy $\ell\eps_0=\eps_0$. On $L^+(d,1)$, the element $\ell$ acts as the identity, so the relation holds. Finally, on $L^+(d,01)$ the image of $\eps_0$ is $\kk u^+_{d,1}$, and $\ell u^+_{d,1}=u^+_{d,1}$. 
Hence, the second relation holds on this module. 
Thus, the $0$-type modules are indexed by $\Iset_d\setminus\{d\}=\Iset_{d-2}$, while the other two types retain all allowed parameters.

Conversely, every module in the displayed list satisfies $P_d(k)=0$ by Lemma~\ref{lem:cartanmodules}. The preceding argument also proves $\ell\eps_0=\eps_0$. Hence the ideal $\Ifd$ acts as zero, so the $\Umir$-action factors uniquely through $\Ufd$. Simplicity is unchanged under this factorisation because $\Umir\twoheadrightarrow\Ufd$ is surjective.
\end{proof}

\section{Split semisimplicity and the presentation theorem}\label{sec:main}
\subsection{Split semisimplicity}\label{subsec:split}

\begin{proposition}\label{prop:semisimple}
The algebra $\Ufd$ is a finite-dimensional semisimple $\kk$-algebra.
\end{proposition}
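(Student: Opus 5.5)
Finite-dimensionality is Proposition~\ref{prop:finite}, so only semisimplicity needs proof. The plan is to show that the Jacobson radical $J=\operatorname{rad}\Ufd$ vanishes. Equivalently, we show that the left regular module ${}_{\Ufd}\Ufd$ is a direct sum of simple modules.

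First, regard $\Ufd$ as a left module over itself and pull it back along the surjection $\Umir\twoheadrightarrow\Ufd$. This gives a $\Umir$-module which is finite-dimensional over $\kk$ by Proposition~\ref{prop:finite}.

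Second, apply the semisimplicity part of Theorem~\ref{thm:rossosimple}. It says every finite-dimensional $\Umir$-module is a direct sum of simple $\Umir$-modules. So we may write $\Ufd=\bigoplus_i S_i$ as $\Umir$-modules, with each $S_i$ simple.

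Third, transfer this decomposition back to $\Ufd$. The $\Umir$-action on $\Ufd$ factors through $\Ufd$, so the ideal $\Ifd$ annihilates every $\Umir$-submodule, in particular each $S_i$. Hence $\Umir$-submodules and $\Ufd$-submodules of $\Ufd$ coincide, and simplicity is the same in both senses. Therefore ${}_{\Ufd}\Ufd$ is a direct sum of simple left $\Ufd$-modules, i.e.\ a semisimple module.

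Finally, an algebra whose regular module is semisimple is semisimple: the radical annihilates every simple module, so $J=J\cdot 1=0$. Artin--Wedderburn then applies. The only delicate point is the meaning of ``finite-dimensional module'' in Rosso's semisimplicity theorem. We must check that no weight-module or type hypothesis is implicit, such as $k$ acting diagonalizably with prescribed eigenvalues. If such a hypothesis were required, we would verify it directly on $\Ufd$. Lemma~\ref{lem:eps} shows that $k=\sum_r\alpha_r\eps_r$ acts diagonalizably with eigenvalues among the $\alpha_r=v^{2r-d}$, so $\Ufd$ is a type-$+$ weight module with respect to left multiplication by $k$. Hence any such hypothesis in Rosso's theorem is satisfied, and the argument goes through.
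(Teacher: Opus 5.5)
Your proposal is correct and is essentially the paper's argument: pull the module back to $\Umir$, decompose it using Rosso's semisimplicity theorem, observe that $\Ifd$ annihilates each simple summand so the decomposition is one of $\Ufd$-modules, and conclude that the regular module, and hence $\Ufd$, is semisimple. The paper states this for an arbitrary finite-dimensional $\Ufd$-module and then specializes to the regular one. Your extra check that $k$ acts diagonalizably (since $P_d$ has distinct roots) is a harmless safeguard; the paper does not need it because it quotes Rosso's theorem for all finite-dimensional modules.
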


\begin{proof}
Let $M$ be a finite-dimensional $\Ufd$-module. Pull it back to a $\Umir$-module. By Theorem~\ref{thm:rossosimple},
$$
M=\bigoplus_j S_j
$$
as a direct sum of simple $\Umir$-modules. The ideal $\Ifd$ annihilates $M$, and hence annihilates every $S_j$. Each $S_j$ is therefore a $\Ufd$-module, and the same direct sum is a decomposition by $\Ufd$-submodules. Thus, every finite-dimensional $\Ufd$-module is semisimple. In particular, the left regular module is semisimple, so $\Ufd$ is a semisimple algebra.
\end{proof}

\begin{lemma}\label{lem:end}
For every simple $\Ufd$-module $S$ in Theorem~\ref{thm:simples}, we have
$$
\End_{\Ufd}(S) \cong \kk.
$$
\end{lemma}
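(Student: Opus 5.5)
The plan is to show directly that every $\Ufd$-endomorphism $\phi$ of $S$ is a scalar. The tool is the weight decomposition of Proposition~\ref{prop:kdecomp} together with a one-dimensional weight space that generates $S$. Since $\phi$ commutes with $k$ and with $\ell$, it commutes with every $\eps_r$. Hence $\phi$ preserves each eigenspace $S_r=\eps_rS$, and it also preserves each $\ell$-eigenspace inside $S_r$ (recall $\ell$ commutes with $\eps_r$ by Lemma~\ref{lem:eps}).

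The key step is to find a nonzero vector $u\in S$ that spans a one-dimensional simultaneous eigenspace of $k$ and $\ell$. Then $\phi(u)=cu$ for some $c\in\kk$. Since $S$ is simple, $S=\Ufd\,u$, and so $\phi=c\cdot\mathrm{id}$, because $\phi(xu)=x\phi(u)=cxu$. We treat the three types in turn.
\begin{enumerate}
\item[(a)] For $S=L^+(m,0)$ or $L^+(m,1)$, the module is the pullback of the standard $(m+1)$-dimensional $U_v(\mathfrak{sl}_2)$-module. Its $k$-eigenvalues $v^{m-2i}$, $0\le i\le m$, are distinct because $v$ is an indeterminate, so each weight space is one-dimensional. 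Take $u$ to be a highest weight vector.
\item[(b)] For $S=L^+(m,01)$ with $m\ge1$, use the description in \S\ref{subsec:rossomod}: the highest $k$-eigenspace, with eigenvalue $v^m$, is $\kk u^+_{0,0}$. Take $u=u^+_{0,0}$. This is one-dimensional already as a $k$-eigenspace, so no information about $\ell$ is even needed.
\end{enumerate}

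Alternatively, one can argue more abstractly. Since $S$ is finite-dimensional, $\phi$ has an eigenvalue in $\overline{\kk}$. A rational eigenvector suffices: $\phi$ restricted to the one-dimensional space $\kk u$ is multiplication by some $c\in\kk$, and then $\phi-c$ is a non-injective endomorphism of a simple module, hence zero by Schur's lemma. This avoids needing $S=\Ufd u$ explicitly. The only point requiring care is confirming, from Rosso's explicit basis, that the top weight space of the mixed module $L^+(m,01)$ is one-dimensional. This is stated in the paper: the basis vectors $u^+_{i,\epsilon}$ with $i=0$ are only $u^+_{0,0}$, since $j\ge1$ for the $\epsilon=1$ part. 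I therefore expect no real obstacle; the argument is the standard highest-weight Schur lemma.
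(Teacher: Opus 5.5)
Your proposal is correct and takes essentially the paper's route. You pick a one-dimensional joint $k$/$\ell$-eigenspace at the highest weight $v^m$ (spanned by $u^+_{0,0}$ in the mixed case), observe that the endomorphism acts on it by a scalar $c\in\kk$, and then use that the simple module is generated by this vector to conclude $\varphi=c\operatorname{id}_S$. Your Schur-lemma variant, where $\varphi-c$ has nonzero kernel, is an equally valid way to finish.
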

\begin{proof}
For $\lambda\in\kk$ and $\delta\in\{0,1\}$, write joint eigenspaces (cf. \cite[Definition 5.1]{Ros18})
$$
S_{\lambda,\delta}
=\{x\in S\mid kx=\lambda x,\ \ell x=\delta x\}.
$$
The three types of simple modules in Theorem~\ref{thm:simples} have the following one-dimensional joint eigenspaces at the highest $k$-eigenvalue:
$$
L^+(m,0)_{v^m,0},
\qquad
L^+(m,1)_{v^m,1},
\qquad
L^+(m,01)_{v^m,0}=\kk u^+_{0,0}.
$$
Let $\varphi\in\End_{\Ufd}(S)$. Since $\varphi$ is a module homomorphism, it commutes with the actions of $k$ and $\ell$, and preserves each joint eigenspace. Hence $\varphi(u)=cu$ for a nonzero vector $u$ in the displayed one-dimensional space and some $c\in\kk$. The simple module $S$ is generated by $u$. Therefore
$$
\varphi(au)=a\varphi(u)=cau
$$
for every $a\in\Ufd$, and $\varphi=c\operatorname{id}_S$.
\end{proof}

\begin{corollary}\label{cor:split}
The algebra $\Ufd$ is split semisimple over $\kk$.
\end{corollary}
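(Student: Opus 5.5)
The plan is to combine Proposition~\ref{prop:semisimple} with Lemma~\ref{lem:end} through the Artin--Wedderburn theorem. By Proposition~\ref{prop:finite} and Proposition~\ref{prop:semisimple}, $\Ufd$ is a finite-dimensional semisimple $\kk$-algebra. Artin--Wedderburn then gives a decomposition
$$
\Ufd\cong\prod_{S}\Mat_{n_S}(D_S),
$$
where $S$ runs over a complete set of pairwise non-isomorphic simple $\Ufd$-modules. Here $D_S=\End_{\Ufd}(S)^{\mathrm{op}}$ is a division algebra, and $n_S=\dim_{D_S}S$.

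Theorem~\ref{thm:simples} gives an explicit complete list of the simple modules. It is exhaustive because every simple module of the finite-dimensional algebra $\Ufd$ is finite-dimensional and hence appears in Rosso's classification. For each $S$ in this list, Lemma~\ref{lem:end} gives $\End_{\Ufd}(S)\cong\kk$. Hence $D_S=\kk$ and $n_S=\dim_{\kk}S$, so
$$
\Ufd\cong\prod_{S\in\mathrm{Irr}(\Ufd)}\Mat_{\dim_\kk S}(\kk).
$$
This is exactly the definition of split semisimple.

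Equivalently, one can argue that $S\otimes_\kk F$ stays simple for every field extension $F/\kk$, since the commutant is $\kk$. Either formulation suffices.

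There is no serious obstacle. The one point to check is that $\End$ is computed over $\Ufd$ rather than over $\Umir$. These agree, because $\Umir\twoheadrightarrow\Ufd$ is surjective and the proof of Lemma~\ref{lem:end} is insensitive to which of the two algebras is used.
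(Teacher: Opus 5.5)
Your proposal is correct and follows the paper's route: the paper combines Proposition~\ref{prop:semisimple} with Lemma~\ref{lem:end} via the criterion that a finite-dimensional semisimple algebra is split when every simple module has endomorphism ring $\kk$. Your Artin--Wedderburn unpacking with $D_S=\kk$ is simply an explicit version of that same criterion.
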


\begin{proof}
A finite-dimensional semisimple algebra is split if the endomorphism division algebra of every simple module is the ground field. The result follows from Proposition~\ref{prop:semisimple} and Lemma~\ref{lem:end}.
\end{proof}

\subsection{The Wedderburn decomposition and the dimension}\label{subsec:wed}

\begin{theorem}\label{thm:wed}
There is a $\kk$-algebra isomorphism
$$
\Ufd\cong
\bigoplus_{m\in\Iset_d}\Mat_{m+1}(\kk)
\ \oplus\
\bigoplus_{m\in\Iset_d,\, m\ge1}\Mat_{2m}(\kk)
\ \oplus\
\bigoplus_{m\in\Iset_{d-2}}\Mat_{m+1}(\kk).
$$
The three sums correspond, in order, to the simple modules of types $(m,1)$, $(m,01)$, and $(m,0)$. Moreover,
$$
\dim_{\kk}\Ufd=(d+1)^3.
$$
\end{theorem}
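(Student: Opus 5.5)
By Corollary~\ref{cor:split}, $\Ufd$ is a finite-dimensional split semisimple $\kk$-algebra, so Artin--Wedderburn gives
$$
\Ufd\cong\bigoplus_{S}\End_{\kk}(S)\cong\bigoplus_S\Mat_{\dim S}(\kk),
$$
the sum running over the isomorphism classes of simple $\Ufd$-modules. Concretely, the map $\Ufd\to\prod_S\End_\kk(S)$ is injective by semisimplicity, since the Jacobson radical is zero. It is surjective by the Jacobson density theorem, using $\End_{\Ufd}(S)=\kk$ from Lemma~\ref{lem:end} together with the pairwise non-isomorphism of the $S$. Theorem~\ref{thm:simples} lists the simples. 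Their dimensions are $m+1$ for $L^+(m,1)$ and $L^+(m,0)$, and $2m$ for $L^+(m,01)$ (from \S\ref{subsec:rossomod}). This yields the stated decomposition immediately.

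The remaining work is the dimension count. Write $d=2n$ or $d=2n+1$, and write $m=d-2t$ with $0\le t\le\lfloor d/2\rfloor$. The total is
$$
\sum_{m\in\Iset_d}(m+1)^2+\sum_{m\in\Iset_d,\,m\ge1}(2m)^2+\sum_{m\in\Iset_{d-2}}(m+1)^2 .
$$

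To evaluate it, I would pair up terms. Combining the first and third sums gives $2\sum_{m\in\Iset_d}(m+1)^2-(d+1)^2$. The middle sum is $4\sum_{m\in\Iset_d}m^2$, since the term $m=0$ contributes nothing. I would then verify $(d+1)^3$ by induction on $d$ in steps of two. Passing from $d-2$ to $d$ replaces $\Iset_{d-2}$ by $\Iset_d=\Iset_{d-2}\cup\{d\}$. The increment of the total is therefore
$$
\bigl[(d+1)^2+(d-1)^2\bigr]+4d^2=6d^2+2
$$
because the new type-$1$ term is $(d+1)^2$, the new type-$0$ term is $(d-1)^2$, and the new type-$01$ term is $(2d)^2$. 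This agrees with $(d+1)^3-(d-1)^3=6d^2+2$. For the base cases, $d=0$ gives $1$, from $\Mat_1$ of type $(0,1)$ alone. $d=1$ gives $2^2+2^2=8$, from types $(1,1)$ and $(1,01)$, with $\Iset_{-1}=\varnothing$.

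The only point needing care is the set-up of the Wedderburn isomorphism, namely that the simple list is complete and irredundant. This is exactly Theorem~\ref{thm:simples} combined with Rosso's pairwise non-isomorphism. The arithmetic is routine once the telescoping increment is found.
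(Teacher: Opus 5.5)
Your proposal is correct and matches the paper's proof: it applies split semisimplicity (Corollary~\ref{cor:split}) and Artin--Wedderburn to the list in Theorem~\ref{thm:simples}, then inducts on $d$ in steps of two from the base cases $1$ and $8$, with increment $(d+1)^2+(2d)^2+(d-1)^2=6d^2+2=(d+1)^3-(d-1)^3$. Your density-theorem justification of the Wedderburn map only spells out the paper's appeal to Artin--Wedderburn; also, strictly, the type-$0$ index set passes from $\Iset_{d-4}$ to $\Iset_{d-2}$, not from $\Iset_{d-2}$ to $\Iset_d$, and that is what produces your $(d-1)^2$ term.
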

\begin{proof}
Corollary~\ref{cor:split} and the Artin-Wedderburn Theorem identify one full matrix block for each simple module. The dimensions of the three types are
$$
\dim L^+(m,1)=m+1,
\qquad
\dim L^+(m,01)=2m,
\qquad
\dim L^+(m,0)=m+1.
$$
The stated decomposition follows from Theorem~\ref{thm:simples}.

Let $D_d=\dim_{\kk}\Ufd$. Then
$$
D_d=
\sum_{m\in\Iset_d}(m+1)^2
+
\sum_{m\in\Iset_d,\, m\ge1}(2m)^2
+
\sum_{m\in\Iset_{d-2}}(m+1)^2.
$$
We have $D_0=1$ and $D_1=8$. For $d\ge2$,
$$
\Iset_d=\Iset_{d-2}\sqcup\{d\}.
$$
Using $\Iset_d=\Iset_{d-2}\sqcup\{d\}$ in the three sums for $D_d$ and $D_{d-2}$ gives
$$
\begin{aligned}
D_d-D_{d-2}
&=(d+1)^2+(2d)^2+(d-1)^2=6d^2+2\\
&=(d+1)^3-(d-1)^3.
\end{aligned}
$$
Induction along the even and odd values of $d$ gives $D_d=(d+1)^3$.
\end{proof}

\begin{example}[Low-degree cases]\label{ex:small-d}
Theorem~\ref{thm:wed} gives
$$
\mathbf U(2,0)\cong\kk,\qquad \mathbf U(2,1)\cong
\Mat_2(\kk)\oplus\Mat_2(\kk),
$$
$$
\mathbf U(2,2)\cong
\Mat_3(\kk)\oplus\Mat_1(\kk)\oplus
\Mat_4(\kk)\oplus\Mat_1(\kk).
$$
The corresponding dimensions are $1$, $8$, and $27$. For $d=0$, one has $P_0(k)=k-1$ and $Q_0(k)=1$, so $k=\ell=1$. 
Relations \textup{(M2)} then give $e=v^2e$ and $f=v^{-2}f$. Hence $e=f=0$, in agreement with $\mathbf U(2,0)\cong\kk$.
\end{example}

\subsection{The main isomorphism}\label{subsec:isom}

\begin{theorem}\label{thm:mainiso}
The epimorphism
$$
\overline\pi_d:\Ufd\twoheadrightarrow\MS
$$
from Proposition~\ref{prop:factor} is an isomorphism.
\end{theorem}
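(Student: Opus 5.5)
The plan is a dimension count. By Proposition~\ref{prop:factor}, $\overline\pi_d$ is a well-defined surjective $\kk$-algebra homomorphism $\Ufd\twoheadrightarrow\MS$. Both algebras are finite-dimensional over $\kk$: the source by Proposition~\ref{prop:finite}, the target because it has the finite basis $\{T_{A,\Delta}\}$. Moreover, Theorem~\ref{thm:wed} gives $\dim_{\kk}\Ufd=(d+1)^3$, and Lemma~\ref{lem:geomdim} gives $\dim_{\kk}\MS=(d+1)^3$.

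A surjective linear map between finite-dimensional $\kk$-vector spaces of equal dimension is injective, by rank--nullity. So $\Ker\overline\pi_d=0$, and $\overline\pi_d$ is a bijective algebra homomorphism, hence an isomorphism.

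There is no real obstacle left at this point, since the substantive work sits in the earlier results. The one point I would check is surjectivity of $\overline\pi_d$. It holds because $\pi_d$ is surjective: by Theorem~\ref{thm:rosgenMS}, the images $\te,\tf,\tk^{\pm1},\tell$ generate $\MS$.

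I would then record the consequence for Theorem~\ref{thm:intro}. Since $\pi_d$ is the composite of the quotient map $\Umir\to\Ufd$ with the injective map $\overline\pi_d$, we get $\Ker\pi_d=\Ifd=\langle P_d(k),(\ell-1)Q_d(k)\rangle$. Uniqueness of the isomorphism follows because it is prescribed on the generators $e,f,k^{\pm1},\ell$.
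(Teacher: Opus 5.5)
Your proposal is correct and matches the paper's proof: you get surjectivity from Proposition~\ref{prop:factor}, take $\dim_{\kk}\Ufd=(d+1)^3=\dim_{\kk}\MS$ from Theorem~\ref{thm:wed} and Lemma~\ref{lem:geomdim}, and conclude that a surjection between spaces of equal finite dimension is bijective. Your closing deduction that $\Ker\pi_d=\Ifd$ is likewise exactly the paper's Corollary~\ref{cor:kernel}.
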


\begin{proof}
Theorem~\ref{thm:wed} and Lemma~\ref{lem:geomdim} give
$$
\dim_{\kk}\Ufd=(d+1)^3=\dim_{\kk}\MS.
$$
A surjective linear map between finite-dimensional vector spaces of the same dimension is bijective. Hence $\overline\pi_d$ is a $\kk$-algebra isomorphism.
\end{proof}

\begin{corollary}\label{cor:kernel}
The natural epimorphism $\pi_d:\Umir\twoheadrightarrow\MS$ satisfies
$$
\Ker\pi_d=
\left\langle P_d(k),(\ell-1)Q_d(k)\right\rangle.
$$
\end{corollary}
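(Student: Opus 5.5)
The plan is to deduce the corollary directly from Theorem~\ref{thm:mainiso} together with the factorisation in Proposition~\ref{prop:factor}. Write $q_d:\Umir\twoheadrightarrow\Ufd$ for the quotient map, so that $\Ker q_d=\Ifd=\left\langle P_d(k),(\ell-1)Q_d(k)\right\rangle$ by Definition~\ref{def:Ufd}. By construction, Proposition~\ref{prop:factor} gives $\pi_d=\overline\pi_d\circ q_d$, since both sides are algebra homomorphisms agreeing on the generators $e,f,k^{\pm1},\ell$.

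The inclusion $\Ifd\subseteq\Ker\pi_d$ was already recorded in the proof of Proposition~\ref{prop:factor}, using Lemmas~\ref{lem:geomcalc} and~\ref{lem:geombound}. For the reverse inclusion, take $x\in\Ker\pi_d$. Then $\overline\pi_d(q_d(x))=0$. Theorem~\ref{thm:mainiso} says $\overline\pi_d$ is injective, so $q_d(x)=0$, that is, $x\in\Ifd$. Hence $\Ker\pi_d=\Ifd$.

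There is no real obstacle at this stage, since all the work is in the dimension count: Theorem~\ref{thm:wed} gives $\dim\Ufd=(d+1)^3$ and Lemma~\ref{lem:geomdim} gives $\dim\MS=(d+1)^3$. The only point to verify is that $\pi_d=\overline\pi_d\circ q_d$ holds as maps on all of $\Umir$, not merely on generators. This holds because both sides are $\kk$-algebra homomorphisms and $\Umir$ is generated by $e,f,k^{\pm1},\ell$.
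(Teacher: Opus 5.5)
Your proposal is correct and follows the paper's own argument: you get $\Ifd\subseteq\Ker\pi_d$ from Proposition~\ref{prop:factor}, and the reverse inclusion from the factorisation $\pi_d=\overline\pi_d\circ q_d$ together with the injectivity of $\overline\pi_d$ (Theorem~\ref{thm:mainiso}). You have simply written out explicitly the steps that the paper's short proof leaves implicit.
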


\begin{proof}
The ideal on the right is contained in $\Ker\pi_d$ by Proposition~\ref{prop:factor}. The induced map from the quotient is injective by Theorem~\ref{thm:mainiso}. Therefore, no further element lies in the kernel.
\end{proof}

\section{An idempotent presentation}\label{sec:idemp}
The Cartan generator can be removed from the presentation by taking the weight idempotents as generators.

\begin{definition}\label{def:idemp}
Let $\mathbf U^{\mathrm{id}}(2,d)$ be the unital $\kk$-algebra generated by
$$
\be,\quad\bfm,\quad\bell,\quad
\beps_0,\quad\beps_1,\quad\ldots,\quad\beps_d.
$$
We set $\beps_j=0$ for $j\notin\{0,1,\ldots,d\}$. The defining relations are
$$
\beps_r\beps_s=\delta_{r,s}\beps_r,
\qquad
\sum_{r=0}^{d}\beps_r=1
\qquad(0\le r,s\le d),
$$
$$
\be\beps_r=\beps_{r+1}\be,
\qquad
\bfm\beps_r=\beps_{r-1}\bfm
\qquad(0\le r\le d),
$$
$$
\be\bfm-\bfm\be
=\sum_{r=0}^{d}[2r-d]\beps_r,
$$
$$
\bell^2=\bell,
\qquad
\bell\beps_r=\beps_r\bell
\qquad(0\le r\le d),
\qquad
\bell\beps_0=\beps_0,
$$
$$
\bell\be\bell=\bell\be,
\qquad
\bell\bfm\bell=\bfm\bell,
$$
and
$$
[2]\be\bell\be
=v^{-1}\be^2\bell+v\bell\be^2,\qquad [2]\bfm\bell\bfm
=v^{-1}\bell\bfm^2+v\bfm^2\bell.
$$
\end{definition}

\begin{theorem}\label{thm:idemp}
There is a unique $\kk$-algebra isomorphism
$$
\mathbf U^{\mathrm{id}}(2,d)
\xrightarrow{\ \sim\ }
\Ufd
$$
which sends
$$
\be\mapsto e,
\qquad
\bfm\mapsto f,
\qquad
\bell\mapsto\ell,
\qquad
\beps_r\mapsto\eps_r
\quad(0\le r\le d).
$$
\end{theorem}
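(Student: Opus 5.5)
The plan is to build mutually inverse homomorphisms $\Phi:\mathbf U^{\mathrm{id}}(2,d)\to\Ufd$ and $\Psi:\Ufd\to\mathbf U^{\mathrm{id}}(2,d)$, each by checking relations on generators, and then to compare the two composites on generators.

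\emph{Step 1: define $\Phi$.} Send $\be,\bfm,\bell,\beps_r$ to $e,f,\ell,\eps_r$. The idempotent relations $\eps_r\eps_s=\delta_{r,s}\eps_r$ and $\sum_r\eps_r=1$ are in Lemma~\ref{lem:eps}. The shift relations are in Lemma~\ref{lem:shift}. The relations $\ell\eps_r=\eps_r\ell$ are in Lemma~\ref{lem:eps}, and $\ell\eps_0=\eps_0$ is Lemma~\ref{lem:boundary}. The relations $\ell^2=\ell$, $\ell e\ell=\ell e$, $\ell f\ell=f\ell$, (M5) and (M6) hold already in $\Umir$. For the commutator, use (M3) together with $k^{\pm1}=\sum_r\alpha_r^{\pm1}\eps_r$ from Lemma~\ref{lem:eps}:
$$
\frac{k-k^{-1}}{v-v^{-1}}=\sum_r\frac{v^{2r-d}-v^{d-2r}}{v-v^{-1}}\,\eps_r=\sum_r[2r-d]\eps_r .
$$

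\emph{Step 2: define $\Psi$.} Set $K=\sum_r\alpha_r\beps_r$ and $K'=\sum_r\alpha_r^{-1}\beps_r$ in $\mathbf U^{\mathrm{id}}(2,d)$, and send $e,f,k,k^{-1},\ell$ to $\be,\bfm,K,K',\bell$. I expect this direction to be the main place where care is needed.
\begin{itemize}
\item (M1): orthogonality and completeness of the $\beps_r$ give $KK'=K'K=1$.
\item (M2): from $\be\beps_r=\beps_{r+1}\be$ one gets
$$
K\be=\sum_s\alpha_s\beps_s\be=\sum_r\alpha_{r+1}\be\beps_r .
$$
The boundary term is harmless, because $\be\beps_d=\beps_{d+1}\be=0$. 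Since $\alpha_{r+1}=v^2\alpha_r$, this equals $v^2\be K$, and multiplying by $K'$ gives (M2). The $\bfm$ case is symmetric, using $\bfm\beps_0=0$.
\item (M3): this follows from the commutator relation together with the computation in Step 1 read backwards.
\item (M4)--(M6): $\bell$ commutes with $K$ because it commutes with each $\beps_r$, and the remaining relations are imposed directly.
\item The ideal $\Ifd$: by orthogonality, $P_d(K)=\sum_r P_d(\alpha_r)\beps_r=0$. Similarly $Q_d(K)=c_d\beps_0$, so $(\bell-1)Q_d(K)=c_d(\bell-1)\beps_0=0$.
\end{itemize}
Hence $\Psi$ is well defined on $\Ufd$.

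\emph{Step 3: the composites are identities.} On generators, $\Phi\Psi$ fixes $e,f,\ell$. It also sends
$$
k\mapsto\sum_r\alpha_r\eps_r=k,\qquad k^{-1}\mapsto\sum_r\alpha_r^{-1}\eps_r=k^{-1},
$$
by Lemma~\ref{lem:eps}.

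For $\Psi\Phi$, the only generators to check are the $\beps_r$, which must satisfy $L_r(K)=\beps_r$. Polynomial evaluation against orthogonal idempotents summing to $1$ gives
$$
h(K)=\sum_s h(\alpha_s)\beps_s
$$
for every $h\in\kk[X]$, so $L_r(K)=\sum_s\delta_{r,s}\beps_s=\beps_r$. Thus both composites fix all generators, so they are identities.

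Uniqueness of the isomorphism is automatic, since its values on the generators are prescribed.
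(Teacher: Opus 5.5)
Your proposal is correct and follows the paper's proof essentially step for step. It uses the same two homomorphisms (your $\Phi$ is the paper's $\Theta$), the same elements $K,K^{-1}$, the same checks $P_d(K)=0$ and $Q_d(K)=c_d\beps_0$, and the same verification of both composites on generators via $h(K)=\sum_s h(\alpha_s)\beps_s$.

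One small step is left implicit in your (M2) check. The rewriting $\sum_s\alpha_s\beps_s\be=\sum_r\alpha_{r+1}\be\beps_r$ also needs $\beps_0\be=0$. This follows from $\beps_0\be=\sum_r\beps_0\be\beps_r=\sum_r\beps_0\beps_{r+1}\be=0$. The paper avoids the point by computing $K\be\beps_r$ and then summing over $r$.
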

\begin{proof}
    Work first in $\mathbf U^{\mathrm{id}}(2,d)$. Define
$$
K=\sum_{r=0}^{d}\alpha_r\beps_r,
\qquad
K^{-1}=\sum_{r=0}^{d}\alpha_r^{-1}\beps_r.
$$
The orthogonal idempotent relations give $KK^{-1}=K^{-1}K=1$.

For $0\le r<d$,
$$
K\be\beps_r
=\alpha_{r+1}\be\beps_r
=v^2\alpha_r\be\beps_r
=v^2\be K\beps_r.
$$
For $r=d$, both sides are zero because $\be\beps_d=0$. Summing over $r$ gives $K\be K^{-1}=v^2\be$. The same argument gives $K\bfm K^{-1}=v^{-2}\bfm$.

Furthermore, $\frac{\alpha_r-\alpha_r^{-1}}{v-v^{-1}}=[2r-d]$. 
Hence the commutator relation in Definition~\ref{def:idemp} is equivalent to
$$
\be\bfm-\bfm\be
=\frac{K-K^{-1}}{v-v^{-1}}.
$$
The remaining relations show that the assignments
$$
e\mapsto\be,
\quad
f\mapsto\bfm,
\quad
k\mapsto K,
\quad
k^{-1}\mapsto K^{-1},
\quad
\ell\mapsto\bell
$$
define a homomorphism from $\Umir$ to $\mathbf U^{\mathrm{id}}(2,d)$.

For every $h(X)\in\kk[X]$, orthogonality gives $h(K)=\sum_{r=0}^{d}h(\alpha_r)\beps_r$. 
Thus $P_d(K)=0$ and
$$
Q_d(K)=Q_d(\alpha_0)\beps_0.
$$
Since $\bell\beps_0=\beps_0$, we have $(\bell-1)Q_d(K)=0$. The homomorphism therefore factors through a homomorphism
$$
\Psi:\Ufd\longrightarrow\mathbf U^{\mathrm{id}}(2,d).
$$

Conversely, Lemmas~\ref{lem:eps}, \ref{lem:boundary}, and \ref{lem:shift}, together with the universal relations in $\Umir$, show that the elements
$$
e,\quad f,\quad\ell,\quad\eps_0,\ldots,\eps_d
$$
in $\Ufd$ satisfy all the relations in Definition~\ref{def:idemp}. Hence, there is a homomorphism
$$
\Theta:\mathbf U^{\mathrm{id}}(2,d)\longrightarrow\Ufd.
$$

The composite $\Theta\Psi$ fixes $e,f,\ell$ and sends $k$ to
$$
\sum_{r=0}^{d}\alpha_r\eps_r=k
$$
by Lemma~\ref{lem:eps}. It is therefore the identity on $\Ufd$. The other composite $\Psi\Theta$ fixes $\be,\bfm,\bell$ and, for $0\le r\le d$, satisfies
$$
\Psi\Theta(\beps_r)
=L_r(K)
=\sum_{s=0}^{d}L_r(\alpha_s)\beps_s
=\beps_r.
$$
Thus, $\Psi$ and $\Theta$ are inverse isomorphisms.
\end{proof}

\end{document}